\tikzset{>=stealth}
\tikzset{>=stealth}
\DeclareMathOperator{\mult}{mult}
\DeclareMathOperator{\Div}{Div}
\DeclareMathOperator{\PDiv}{PDiv}
\DeclareMathOperator{\divv}{div}
\newcommand{\ph}{\varphi}
\newcommand{\hC}{\widehat{\mathbb{C}}}
\newcommand{\Spin}{\mathrm{Spin}}
\newcommand{\Jac}{\mathrm{Jac}}
\newcommand\xxleftrightarrow[2][]{%
  \ext@arrow 9999{\longleftrightarrowfill@}{#1}{#2}}
\newcommand\longleftrightarrowfill@{%
  \arrowfill@\leftarrow\relbar\rightarrow}
\newcommand{\C}{\mathbb{C}}
\newcommand{\Z}{\mathbb{Z}}
\newtheorem{theorem}{Theorem}
\newtheorem{corollary}[theorem]{Corollary}
\newtheorem{remark}{Remark}
\newtheorem{example}{Example}
\theoremstyle{definition}
\newtheorem{definition}{Definition}
\begin{document}
\author{Yahya Almalki$^{1}$ \and Craig A. Nolder$^2$}
\address{Florida State University$^{1,2}$   \ \ \     King Khalid University$^1$}
\email{ yaa09d@my.fsu.edu $\bullet$  nolder@math.fsu.edu}
\title{Spin Structures and Branch Divisors on $p$-gonal  Riemann surfaces  }
\maketitle

\tableofcontents

\begin{abstract}

We study spin structures on Riemann and Klein surfaces in terms of divisors.  In particular, we take a closer look at spin structures on hyperelliptic  and $p$-gonal surfaces defined by divisors supported on their branch points.  Moreover, we study invariant spin divisors under automorphisms and anti-holomorphic involutions of Riemann surfaces.
\end{abstract}
\section{Introduction}
Spin structures are roots of the canonical line bundle on a Riemann surface.  They can be also viewed as divisors and lifts of  covering groups, see~\cite{NP-HSK}. They appear in many areas in mathematics such as Riemannian and algebraic geometry.  Spin structures are important in physics because of their widespread applications.  In this paper, we look at structures  from the viewpoint of complex analysis; in particular, as divisors on Riemann surfaces.
 We start by proving a theorem that gives a presentation of the $2$-torsion, $J_2$, and $p$-torsion, $J_p^\ast$ supported on the branch set, subgroups of the Jacobian of hyperelliptic and their natural generalization as $p$-gonal surfaces.  It turns out this presentation is very useful when studying invariant $m$-spin divisors supported on branch points under automorphisms of hyperelliptic surfaces.  We review a formula obtained by Mumford in \cite{mumford} that gives a presentation of $2$-spin divisors on hyperelliptic surfaces.  Then we prove a similar formula for $m$-spin divisors, for an even $m$.

S. Natanzon extended the definition of spin structures on Riemann surfaces to Klein surfaces.  He studied these structures using Arf functions and liftings of covering groups.  In this work, we rewrite Natanzon's definition using divisors on complex doubles, which are orientable surfaces, of Klein surfaces.  In particular, we study hyperelliptic surfaces admitting anti-holomorphic involutions and count their spin structures.  We generalize spin divisors on Riemann surfaces to the case of Klein surfaces.  The main tool is using complex doubles of Klein surfaces.  Moreover,  we take a closer look at Klein surfaces whose complex double are hyperelliptic surfaces.  In particular, we study invariant spin divisors under anti-holomorphic involutions of Riemann surfaces.

\section{Preliminaries}

\begin{definition} Let $X$ be a compact Riemann surface of genus $g$ and let $K_X$ denote the canonical line bundle on $X$.
  A holomorphic line bundle satisfying $L^{\otimes 2}=K_X$ is called a holomorphic spin bundle or a spin structure.
\end{definition}
It is known that spin bundles exist on a compact Riemann surface of genus $g$ and there are $2^{2g}$ of them, \cite{Bobenko}.  In \cite{atiyah}, Atiyah showed that there is a correspondence between spin bundles and theta characteristics on a compact Riemann surface.
Natanzon generalized the above definition and introduced the concept of $m$-spin bundles, see~\cite{NP-MSH,NP-CSK,NP-HSK}.
\begin{definition}
Let $X$ be a compact Riemann surface of genus $g$ and let $K_X$ denote the canonical line bundle on $X$.
    A holomorphic line bundle satisfying $L^{\otimes m}=K$ is called a holomorphic $m$-spin bundle or an $m$-spin structure where $m$ is a positive integer such that $m|2g-2$.
\end{definition}
According to \cite{Bobenko}, if $m|2g-2$, there are $m^{2g}$ $m$th-roots of the canonical line bundle.  Therefore, there are $m^{2g}$ $m$-spin structures.  Since our ultimate goal is studying spin structures using divisors,  we need to interpret the above definitions in terms of divisors.
 Using the correspondence between classes of linearly equivalent divisors and isomorphic line bundles, we can rewrite the above definitions in terms of divisors.

 \begin{definition}
 Let $X$ be a compact Riemann surface of genus $g$ and let $K$ denote the canonical divisor on $X$.
     A divisor $D$ is an $m$-spin structure or an $m$-spin divisor if $m.D=K$, where $m$ is a positive integer such that $m|2g-2$.
\end{definition}

Let $\Div(X)$ and  $\PDiv(X)$ denote the group of divisors and the group of principal divisors; respectively.  The group $\Jac(X)=\Div_0(X)/\PDiv(X)$ is called the Jacobian of $X$. It is well-known that the Jacobian group of a genus $g$ compact Riemann surface  can be given a complex structure, namely it is a complex torus of dimension $g$. For example the Jacobian of a torus is a torus itself, $J(T)=T$. The Jacobian also is an abelian variety.  A detailed discussion of these facts can be found for example in the book of Miranda~\cite{Miranda}. For our purposes, we only need the group structure of the Jacobian.  Denote by $J_m$ the additive subgroup of $m$-torsion degree zero divisors up to linear equivalence, $J_m(X)=\{D\in \Jac(X)| m.D=0\}$.  Clearly $J_m(X)$ is a subgroup of $\Jac(X)$, see~\cite{donu}.  For $m|2g-2, m\ge 2$, let $\mathrm{Spin}_m(X)$ denote the set of $m$-spin divisors on a compact Riemann surface $X$.  For the rest of this work, we always assume $m|2g-2, m\ge 2$ when talking about $m$-spin structures.
\begin{theorem}\label{spin-Jm}
  There is a one-to-one correspondence  between the set of $m$-spin divisors on a compact Riemann surface $X$, $\mathrm{Spin}_m(X)$,  and the group of $m$-torsion elements of the Jacobian, $J_m(X)$.
  \end{theorem}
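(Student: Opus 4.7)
The plan is to exhibit the bijection as a torsor structure: $\mathrm{Spin}_m(X)$ is a principal homogeneous space for $J_m(X)$, and the correspondence comes from fixing a basepoint. Concretely, I will choose any reference $m$-spin divisor $D_0 \in \mathrm{Spin}_m(X)$ and define
\[
\Phi : \mathrm{Spin}_m(X) \longrightarrow J_m(X), \qquad \Phi(D) = [D - D_0].
\]
The existence of such a $D_0$ is the only non-trivial ingredient, and it is already guaranteed: the hypothesis $m \mid 2g-2$ is precisely the condition under which the canonical bundle admits an $m$-th root, as cited above from \cite{Bobenko}.

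Next I would verify that $\Phi$ is well-defined. Since $D$ and $D_0$ are both $m$-spin divisors, $mD = K = mD_0$ as divisor classes, so $m(D - D_0) = 0$ in $\Jac(X)$; moreover $\deg(D - D_0) = \tfrac{2g-2}{m} - \tfrac{2g-2}{m} = 0$, so $D - D_0$ defines an element of $J_m(X)$. (Here and throughout, all equalities between divisors are understood modulo linear equivalence, since spin structures correspond to divisor classes via the line-bundle dictionary explained in the Preliminaries.)

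For injectivity, if $\Phi(D) = \Phi(D')$ then $D - D_0 \sim D' - D_0$, hence $D \sim D'$, so they represent the same $m$-spin class. For surjectivity, given any $[E] \in J_m(X)$ set $D := D_0 + E$; then $\deg D = (2g-2)/m$ and
\[
m \cdot D \;=\; m D_0 + m E \;\sim\; K + 0 \;=\; K,
\]
so $D \in \mathrm{Spin}_m(X)$ and $\Phi(D) = [E]$.

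The only real obstacle is ensuring a base $m$-spin divisor exists; once this is in hand, the argument is a formal translation of the torsor structure on $\mathrm{Pic}(X)$ into the language of divisor classes. As a byproduct, since $J_m(X) \cong (\Z/m\Z)^{2g}$ for a compact Riemann surface of genus $g$, this also recovers the count $|\mathrm{Spin}_m(X)| = m^{2g}$ mentioned above.
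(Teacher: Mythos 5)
Your proposal is correct and follows essentially the same route as the paper: fix a base $m$-spin divisor $D_0$ and show that $D \mapsto D - D_0$ is a well-defined bijection onto $J_m(X)$. Your explicit remark that the existence of $D_0$ (guaranteed by $m \mid 2g-2$) is the only non-formal ingredient is a worthwhile clarification, but it does not change the argument.
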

 The proof is very straightforward.
    Let $D_0$ be an $m$-spin divisor. Define a map \begin{eqnarray*}
                                                   \mu_{D_0}: \mathrm{Spin}_m(X) &\longrightarrow & J_m \\
                                                    D&\longmapsto &  D-D_0 .
                                                 \end{eqnarray*}
This map is well defined: given $D\in \mathrm{Spin}_m(X)$, then $m(D-D_0)=K-K=0$ i.e.  $D-D_0\in J_m$.
     Assume $\mu_{D_0}(D_1)=\mu_{D_0}(D_2)$ i.e. $D_1-D_0=D_2-D_0$, then $D_1=D_2$. Hence, the map $\mu_{D_0}$ is injective.
    Moreover, given $E\in J_m$, then $m(D_0+E)=mD_0+mE=K$, hence $D_0+E \in \mathrm{Spin}_m(X)$. Since $E=\mu_{D_0}(D_0+E)$, the surjectivity follows.

\section{Spin Divisors on the Riemann Sphere $\hC$ and the Torus}

Let $\omega= dz$ be a one-form on the Riemann sphere, $\hC= \mathbb{C}\cup\{\infty\}$.  Let $t=1/z$ then $dz=-1/t^2 dt$.  Hence $\omega$ has a double pole at $\infty$.  Therefore, $\divv(\omega)=-2.\infty$ and the canonical divisor on $\hC$ is $K=-2.\infty$.    On the Riemann sphere, it is not hard to see that any divisor of degree $0$ can be written as a divisor of a meromorphic function, \cite{Miranda}.  Therefore, the Jacobian of the Riemann sphere is trivial $\Jac(\hC)=\Div_0/\PDiv=\{0\}$ and its $m$-torsion subgroups $J_m$ are all trivial.  Since the only positive integer satisfying $m|2g-2, m\ge 2$ is $m=2$ and by the correspondence in Theorem~\ref{spin-Jm},  there is only one $2$-spin divisor.   In particular, the divisor $D=-1.\infty$ is a $2$-spin divisor on $\hC$, $2D=K$.

Let $w_1, w_2$ be two fixed complex numbers such that $w_1/w_2$ is not real. Let $\Lambda=\Lambda(w_1, w_2)=\{n_1w_1+n_2w_2| n_1, n_2 \in \Z \}$ be a lattice over the complex plane. The lattice $\Lambda$ is a subgroup of $\C$ and the quotient $\C/\Lambda$ is a complex torus and it has a group structure, \cite{Miranda}.
Note that for an element $x$ in the torus, viewed as a group, satisfies $mx=0$ in $T$, $m\ge 2$ if and only if $mx\in \Lambda$ i.e. $mx$ is a linear combination of $w_1, w_2$. Therefore,  the $m$-torsion subgroup, $\mathrm{Tor}_m$, of the torus is $\mathrm{Tor}_m=\frac{1}{m}\Lambda=\{a\frac{w_1}{m}+b\frac{w_2}{m}| a, b \in \Z, 0\le a,b\le m-1\}$.

The 1-form $\omega=dz$ is a nowhere zero holomorphic form and its divisor is $\divv(\omega)=0$.  Therefore, the canonical divisor on the torus is $K=0$. Moreover,  it is well known that the Jacobian of the torus is itself, $\Jac(T)=T$, \cite{Miranda}. Hence, the subgroups $\mathrm{Tor}_m\subset T$ and $J_m\subset \Jac(T)$ are isomorphic and can be identified. Note that elements of $\mathrm{Tor}_m$ are points on the torus, but elements of $J_m$ are divisors of degree $0$.  Hence, we identify the point $a\frac{w_1}{m}+b\frac{w_2}{m}$ with the divisor $a.\frac{w_1}{m}+b.\frac{w_2}{m}-(a+b).0$.  By the correspondence in Theorem~\ref{spin-Jm} between $\mathrm{Spin}_m$ and $J_m$, one can write down explicitly all the $m^2$ $m$-spin divisors on the torus.  For example, $D_1=0, D_2=1.w_1/2-1.0, D_3=1.w_2/2-1.0, D_4=1.((w_1+w_2)/2)-1.0$ are the four $2$-spin divisors on the torus.

\section{Hyperelliptic and $p$-Gonal Riemann Surfaces }\label{hyp-p}
 Let $X$ be the compact Riemann surface of genus $g$ defined by $y^2=\prod_{i=1}^{2g+2}(x-e_i)$ or $y^2=\prod_{i=1}^{2g+1}(x-e_i)$ where $x$ and $y$ are complex variables.  A surfaces of this form is called a hyperelliptic Riemann surfaces and it is a degree two branched covering of the Riemann sphere $\pi:X\longrightarrow \hC, (x,y)\longmapsto x$  with $2g+2$ ramification points $A=\{p_1,\ldots, p_{2g+2}\}$ (if the degree of the polynomial is $2g+1$, then $p_{2g+2}=(\infty,0)$) such that $\pi(p_i)=e_i$, see~\cite{Bobenko} and \cite{Miranda}.  Since hyperelliptic and $p$-gonal surfaces can be viewed as covers of the Riemann sphere, we will  compute canonical divisors on these surfaces by pulling-back the canonical divisor on the sphere.

\begin{theorem}\label{unique-mero}
Every meromorphic function $f$ on a hyperelliptic Riemann surface $X$ defined by $y^2=\prod_{i=1}^{2g+2}(x-e_i)$ or $y^2=\prod_{i=1}^{2g+1}(x-e_i)$ can be written uniquely as
$f = r(x) + ys(x)$,
where $r(x)$ and $s(x)$ are rational functions of $x$.
\end{theorem}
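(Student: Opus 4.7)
The plan is to exploit the hyperelliptic involution $\sigma:X\to X$, $(x,y)\mapsto(x,-y)$, which generates the deck group of the degree-two covering $\pi:X\to\hC$. The quotient $X/\langle\sigma\rangle$ is biholomorphic to $\hC$, so the field of meromorphic functions on $X$ is a degree-two extension of the field of $\sigma$-invariant meromorphic functions, and the latter is precisely $\pi^\ast\bigl(\mathbb{C}(x)\bigr)$, the pull-back of rational functions in $x$. Thus any $\sigma$-invariant meromorphic function on $X$ is a rational function of $x$, which is the crucial identification.

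Given a meromorphic $f$ on $X$, I would first decompose it symmetrically:
\begin{equation*}
f \;=\; \tfrac{1}{2}\bigl(f+\sigma^\ast f\bigr) \,+\, \tfrac{1}{2}\bigl(f-\sigma^\ast f\bigr).
\end{equation*}
The first summand is $\sigma$-invariant, hence equals $r(x)$ for some rational function $r$. For the second summand, observe that $y$ itself is anti-invariant ($\sigma^\ast y = -y$), so dividing the anti-invariant part by $y$ produces an $\sigma$-invariant meromorphic function on $X$, which again must be a rational function $s(x)$ in $x$. This yields $\tfrac{1}{2}(f-\sigma^\ast f) = y\,s(x)$ and gives the desired decomposition $f=r(x)+y\,s(x)$.

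For uniqueness, suppose $r_1(x)+y\,s_1(x)=r_2(x)+y\,s_2(x)$. Applying $\sigma^\ast$ flips the sign of the $y$-term, giving $r_1(x)-y\,s_1(x)=r_2(x)-y\,s_2(x)$. Adding and subtracting these two identities forces $r_1=r_2$ and $y(s_1-s_2)=0$, and since $y\not\equiv 0$ as a meromorphic function on $X$ we conclude $s_1=s_2$.

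The only subtle step is the assertion that every $\sigma$-invariant meromorphic function on $X$ descends to a rational function of $x$. This is not a routine computation but a consequence of the fact that the meromorphic function field of $\hC$ is $\mathbb{C}(x)$ together with the identification $X/\langle\sigma\rangle\cong\hC$; equivalently, it is the statement that $\mathbb{C}(X)=\mathbb{C}(x)[y]$ as a degree-two field extension with minimal polynomial $y^2-\prod(x-e_i)$. I would cite this extension-of-fields description (standard for hyperelliptic curves, e.g.\ \cite{Miranda}) rather than reprove it from scratch, since it is the genuine algebraic content behind the statement.
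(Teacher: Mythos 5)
The paper does not actually prove this theorem; it simply refers the reader to \cite{Miranda}. Your argument is correct and is essentially the standard proof found there: symmetrize and antisymmetrize $f$ under the hyperelliptic involution $\sigma$, divide the anti-invariant part by $y$, and use the identification of $\sigma$-invariant meromorphic functions with $\mathbb{C}(x)$ (equivalently, $\mathbb{C}(X)=\mathbb{C}(x)[y]$ is a degree-two extension), which you correctly isolate as the only nontrivial ingredient.
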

Consult~\cite{Miranda} for a proof of this theorem.  The meromorphic function $\pi$ has either a pole of order two if this pole is also a ramification point or it has two simple poles.
In the first case we have  $\pi(p_k)=\infty$, for some $ p_k\in A$. Then the meromorphic function $f=\pi-\pi(p_i), i\neq k$ has an order two pole at $p_k$ and an order two zero at $p_i$.  Therefore $(f)=2.p_i-2.p_k$, we have $2.p_i\sim 2.p_k, \forall p_i$.  On the other hand, if $\forall p_i,\pi(p_i)\neq \infty$, $\pi$ has two simple poles $a_1,a_2$, then $ div(\pi-\pi(p_i))=2.p_i-a_1-a_2$.
Hence, $2.p_i\sim a_1+a_2\forall p_i$.

Let $D=2.p_i$, it follows that, in either of the two above cases, $D\sim 2.p_j, \forall p_j\in A$.  Moreover, we have $2.p_1+\cdots+2. p_{2g+2}\sim (2g+2)(2.p_i)\sim (2g+2)D$.
Notice also the divisor of the meromorphic function $h(x,y)=y/(x-e_j)^{g+1}$ is $(h)=p_1+\cdots+p_{2g+2}-(2g+2).p_j$, hence $p_1+\cdots+p_{2g+2}\sim (2g+2).p_j$.  Let $K_\infty=-2.\infty$ be the canonical divisor on $\hC$.  Notice that the pull-back of the canonical divisor on the sphere is  $\pi^*(K_\infty)=-2.D$ and the ramification divisor is $R_\pi=p_1+\cdots+ p_{2g+2}$.  Therefore, we have an expression of the canonical divisor on the above hyperelliptic surface,   $K_X = \pi^*(K_\infty)+R_\pi\sim -2D+p_1+\cdots+ p_{2g+2}\sim -2D+(g+1)D=(g-1)D $.

Another class of Riemann surfaces that admit an algebraic model are surfaces defined by equations of the form $y^p=(x-e_1)\cdots(x-e_r)$, where $x, y$ are complex variables, $r=np, n\in \mathbb{N}$ , $p$ is a prime, and $g=(p-1)(r-2)/2$. A surface $X$ of this type is known as  a $p$-gonal surface and it is a degree $p$ branched covering of the sphere  $\pi:X\longrightarrow \hC, (x,y)\longmapsto x$ with $r$ branch points $e_1, \ldots, e_r$ and $r$ ramification points $a_i=(e_i,0)\in X$, see~\cite{Miranda}.  The surface $X$ has a $p$-involution $j:X\longrightarrow X, j(x,y)=(x, e^{2\pi i /p}y)$ which fixes the ramification points.
\begin{remark}
  The roots of a polynomial defining a $p$-gonal surface need not be simple in general, but for our purposes, we will assume they are.
\end{remark}
 \begin{theorem}
Every meromorphic function $f$ on a $p$-gonal Riemann surface $X$ defined by $y^p=(x-e_1)\cdots(x-e_r)$ can be written uniquely as
$\displaystyle f =\sum_{i=0}^{p-1}s_i(x)y^i$,
where  $s_i(x)$ are rational functions of $x$.
\end{theorem}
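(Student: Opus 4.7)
My plan is to reduce the statement to a basic piece of field theory, via the degree-$p$ branched covering $\pi: X \to \hC$, $(x,y) \mapsto x$. Writing $\mathcal{M}(X)$ for the field of meromorphic functions on $X$, the pullback $\pi^\ast$ realizes $\mathcal{M}(X)$ as a degree-$p$ algebraic extension of $\mathcal{M}(\hC) = \C(x)$, and the element $y \in \mathcal{M}(X)$ already satisfies the obvious relation $y^p = \prod_{i=1}^r(x-e_i)$ over $\C(x)$. Once I know that $y$ is a primitive element of this extension, the claimed expansion is just the statement that $\{1, y, y^2, \ldots, y^{p-1}\}$ is a $\C(x)$-basis of $\mathcal{M}(X)$.

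The concrete route is as follows. First, I would form the polynomial $P(T) = T^p - \prod_{i=1}^r(x-e_i) \in \C(x)[T]$, which has $y$ as a root. Next, I would establish irreducibility of $P(T)$ over $\C(x)$ by applying Eisenstein's criterion at the prime $(x-e_1)$ in the UFD $\C[x]$: since the roots $e_i$ are pairwise distinct (per the preceding Remark), the constant term $-\prod(x-e_i)$ is divisible by $(x-e_1)$ exactly once, the middle coefficients all vanish, and the leading coefficient is $1$. Gauss's lemma then transports irreducibility from $\C[x][T]$ to $\C(x)[T]$. Since $[\mathcal{M}(X):\C(x)] = \deg \pi = p$, the irreducible polynomial $P(T)$ of degree $p$ must be the minimal polynomial of $y$, so $\mathcal{M}(X) = \C(x)(y)$ and $\{1, y, \ldots, y^{p-1}\}$ is a basis over $\C(x)$. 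Uniqueness of the decomposition $f = \sum_{i=0}^{p-1} s_i(x)y^i$ is then just the linear independence of this basis.

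The main obstacle is the irreducibility step, which is precisely where the simple-root hypothesis recorded in the Remark is indispensable: if some $e_j$ appeared with multiplicity divisible by $p$, Eisenstein would no longer apply, the cover might fail to be connected of degree $p$, and the claimed basis would collapse to a smaller one. As an alternative, more geometric approach, one could exploit the cyclic automorphism $j(x,y) = (x, e^{2\pi i/p}y)$, which generates $\mathrm{Gal}(\mathcal{M}(X)/\C(x)) \cong \Z/p\Z$; averaging $f$ against characters of this group isolates each summand $s_k(x)y^k$ as the $j^\ast$-eigencomponent with eigenvalue $e^{-2\pi i k/p}$, and one then checks that each such eigenfunction divided by $y^k$ is invariant, hence descends to a rational function of $x$. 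Either way, the argument runs in close parallel to the proof of Theorem~\ref{unique-mero}, with the hyperelliptic involution replaced by the order-$p$ automorphism $j$.
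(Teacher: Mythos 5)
Your proof is correct and complete. Note that the paper itself supplies no argument for this theorem: it merely remarks that the statement is analogous to Theorem~\ref{unique-mero} and refers the reader to Miranda for the theory of function fields on $p$-gonal surfaces. Your route --- identify $\mathcal{M}(X)$ as a degree-$p$ extension of $\pi^\ast\mathcal{M}(\widehat{\mathbb{C}})=\mathbb{C}(x)$ via the degree of the covering, show $T^p-\prod_i(x-e_i)$ is irreducible by Eisenstein at $(x-e_1)$ (using the simple-root hypothesis from the Remark) together with Gauss's lemma, and conclude that $1,y,\dots,y^{p-1}$ is a $\mathbb{C}(x)$-basis --- is exactly the standard argument the citation is pointing at, so you have in effect filled in the omitted proof rather than diverged from it. You also correctly flag the one place where a hypothesis is genuinely needed: without control on the multiplicities the polynomial could factor and the curve could be reducible, in which case $\mathcal{M}(X)$ is not even a degree-$p$ field extension. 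The only blemish is cosmetic: in your alternative Galois-theoretic sketch, with the pullback convention $j^\ast f=f\circ j$ the eigenvalue of $y^k$ is $e^{2\pi i k/p}$ rather than $e^{-2\pi i k/p}$; this is a matter of convention and does not affect the eigenspace decomposition argument.
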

This theorem is similar to Theorem~\ref{unique-mero}. The reader may consult~\cite{Miranda} for proofs and detailed discussion of function fields on $p$-gonal surfaces.

 Similar to what we did in the case of hyperelliptic surfaces, the meromorphic function $x-e_i:X\longrightarrow \hC$ has a zero at $a_i=(e_i,0)$ of order $p$  and $p$ simple zeros $z_1,\ldots ,z_p$ which are permuted under the $p$-involution, see~\cite{gonzalez}.  Hence, the divisor of $x-e_i$ is $(x-e_i)=p.a_i-(z_1+\cdots+z_p)$.  Furthermore,  the meromorphic function $y:X\longrightarrow \hC$ has $r=np$ simple zeros at the ramification points and $p$ poles of order $n$.  Therefore, the divisor of the function $y:X\longrightarrow \hC$ is $(y)=a_1+\cdots a_r-n(z_1+\cdots+z_p)$, for details see~\cite{Miranda} and \cite{gonzalez}. Combining both divisors, we have the relation $a_1+\cdots a_r=np.a_i$. Furhthermore,  simple calculations show that the ramification divisor is $R_\pi=\sum_{x\in X}((\mult_x \pi) -1).x=(p-1)(a_1+\cdots+ a_r)$ and the pullback of the canonical divisor on  the sphere $\hC$ is $\pi^\ast(-2.\infty    )=-2pa_i$.   Using the formula of  the canonical divisor $K_X=R_\pi+ \pi^\ast(-2.\infty)$, we see that   $K_X=-2p.a_i+(p-1)(a_1+\cdots+ a_r)=p(np-(n+2))a_i$.  Using the genus formula, $g=(p-1)(r-2)/2$, we get $K_X=(2g-2)a_i$.  Let $D=p.a_i$, then $K_X=(pn-(n+2))D$.

\begin{remark}
  We say a divisor is supported on the branch set of a surface, if this divisor can be written explicitly in terms of the ramification points.  When studying hyperelliptic and $p$-gonal surfaces, we will not make a distinction between the branch points on the sphere $e_i$ and  the ramification points $a_i=(e_i,0)$ on the surface.  This seems to be the standard terminology in literature, e.g.\cite{gonzalez} and \cite{invariant}.   We will denote the branch set of a surface $X$ by $B(X)$.
\end{remark}
\begin{theorem}
  Let $X$ be a $p$-gonal Riemann surface of genus $g$.  Assume $m|2g-2$, then there exist at least one $m$-spin structure supported on the branch set.
\end{theorem}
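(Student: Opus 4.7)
The plan is to read off the proof directly from the canonical divisor formula that was just derived. The preceding discussion established that, using the principal divisors of $x - e_i$ and $y$, the $p$-involution of $X$ yields the linear equivalence $a_1 + \cdots + a_r \sim np \cdot a_i$ for any ramification point, and combining this with the pullback $\pi^\ast(-2.\infty) = -2p \cdot a_i$ and the ramification contribution $R_\pi = (p-1)(a_1 + \cdots + a_r)$ gives $K_X \sim (2g-2)\, a_i$. This is the only ingredient I need.

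By hypothesis $m \mid 2g - 2$, so write $2g - 2 = m k$ with $k \in \mathbb{Z}_{>0}$. Fix any branch point $a_i \in B(X)$ and set
$$E := k \cdot a_i.$$
Since $k$ is an integer, $E$ is a bona fide divisor, supported on the single point $a_i \in B(X)$. Then
$$m E \;=\; m k \cdot a_i \;=\; (2g-2)\, a_i \;\sim\; K_X,$$
so $E$ is an $m$-spin divisor in the sense of the definition (interpreted, as throughout the paper, up to linear equivalence, in line with Theorem~\ref{spin-Jm}). Its support lies in the branch set by construction, which is exactly what is claimed.

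There is essentially no obstacle. The divisibility assumption $m \mid 2g - 2$ is used for precisely one thing: to guarantee that $k = (2g-2)/m$ is an integer, so that $k \cdot a_i$ makes sense as an element of $\mathrm{Div}(X)$. All the real work was already done in deriving the concise expression $K_X \sim (2g-2)\, a_i$ for the canonical class; once that formula is in hand the theorem is a one-line verification.
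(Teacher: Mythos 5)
Your proof is correct and is essentially identical to the paper's: both take $\theta = \frac{2g-2}{m}\cdot a_i$ for a branch point $a_i$ and verify $m\theta = (2g-2)a_i = K_X$ using the canonical divisor formula derived in Section~\ref{hyp-p}. No further comment is needed.
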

\begin{proof}
  Let $\theta=\frac{(2g-2)}{m}. a_0$, where $a_0$ is a branch point.  Clearly, $m.\theta=(2g-2).a_0=K_X$, hence $\theta$ is an  $m$-spin structure.
\end{proof}

\section{Jacobian Groups of $p$-Gonal Surfaces and their Subgroups}\label{number-of-spin}
Let $J_m=\{ E\in \Jac(X)| m.E=0 \}$. This group is known as the subgroup of $m$-torsion points. It is known that $J_m\cong \mathbb{Z}_m^{2g}$ where $g$ is the genus of a Riemann surface $X$, see \cite{donu}. Below we prove some facts about $m$-torsion subgroups that we will use in later sections.
\begin{theorem}On a Riemann surface $X$, the following holds

\begin{enumerate}
               \item $J_{m_1}\bigcap J_{m_2}=J_{\mathrm{gcd}(m_1, m_2)}$.
               \item  $J_{m_1}+ J_{m_2}\subseteq J_{\mathrm{lcm}(m_1, m_2)}$ where $J_{m_1}+J_{m_2}=\{E_1+E_2|E_1, E_2\in \mathrm{Jac}(X), m_1E_1=0, m_2E_2=0 \}$.
\end{enumerate}

\end{theorem}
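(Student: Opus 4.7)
The plan is to unwind the definition of $J_m$ in both directions and rely on two elementary number-theoretic identities: the Bézout identity for $\gcd$ and the definition of $\mathrm{lcm}$ in terms of divisibility. Throughout, I will use that $\mathrm{Jac}(X)$ is an abelian group, so multiplying a class by an integer distributes over sums and associates naturally.

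For part (1), I will prove the two inclusions separately. For the easy direction $J_{\gcd(m_1,m_2)} \subseteq J_{m_1}\cap J_{m_2}$, let $d=\gcd(m_1,m_2)$ and suppose $E\in J_d$. Since $d\mid m_1$ and $d\mid m_2$, I can write $m_i E = (m_i/d)(dE)=0$, giving membership in both $J_{m_1}$ and $J_{m_2}$. For the reverse inclusion, I will invoke Bézout: pick integers $a,b$ with $am_1+bm_2=d$. If $E\in J_{m_1}\cap J_{m_2}$, then
\[
dE = (am_1+bm_2)E = a(m_1 E)+b(m_2 E) = 0,
\]
so $E\in J_d$. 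This is the one step where the Jacobian's status as an honest abelian group matters.

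For part (2), let $L=\mathrm{lcm}(m_1,m_2)$, and take $E_1\in J_{m_1}$, $E_2\in J_{m_2}$. Since $m_1\mid L$ and $m_2\mid L$, I can compute $L E_i = (L/m_i)(m_i E_i)=0$ for $i=1,2$, and therefore $L(E_1+E_2)=L E_1+L E_2=0$, giving $E_1+E_2\in J_L$ as required.

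Neither step poses a genuine obstacle; the only subtlety worth flagging is conceptual, namely that the manipulations take place in the quotient group $\mathrm{Jac}(X)=\mathrm{Div}_0(X)/\mathrm{PDiv}(X)$, so equalities such as $m_iE_i=0$ are meant modulo principal divisors. This is why Bézout's identity transfers cleanly: the relation $dE=a(m_1E)+b(m_2E)$ is an identity in the abelian group $\mathrm{Jac}(X)$, and vanishing of each summand forces $dE=0$ there. I would also remark in passing that the inclusion in (2) is typically strict, since $J_L$ can contain elements not expressible as a sum of an $m_1$-torsion and an $m_2$-torsion element when $\gcd(m_1,m_2)>1$, although the statement only asserts the containment.
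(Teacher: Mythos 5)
Your proof is correct and follows essentially the same route as the paper's: divisibility of $\gcd(m_1,m_2)$ into each $m_i$ for the easy inclusion, B\'ezout's identity for the reverse inclusion, and divisibility of each $m_i$ into $\mathrm{lcm}(m_1,m_2)$ for part (2). Your remark that the equalities are meant in the quotient group $\mathrm{Jac}(X)$ is a worthwhile clarification but does not change the argument.
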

\begin{proof}
  To prove the first part let $d=\mathrm{gcd}(m_1, m_2)$, then for some $\widehat{m_1}, \widehat{m_2}\in \mathbb{Z}$, we can write $m_1=\widehat{m_1}.d$ and $m_2=\widehat{m_2}.d$. If $E\in J_{\mathrm{gcd}(m_1, m_2)}$ i.e. $d.E=0$, then $m_1.E=0, m_2.E=0$. Hence, $E\in J_{m_1}\bigcap J_{m_2} $.  On the other hand, using Euclid's algorithm, $d=\mathrm{gcd}(m_1, m_2)$ can be written as  $d=s_1m_1+s_2m_2$ where $s_1, s_2\in \mathbb{Z}$.  If  $E\in J_{m_1}\bigcap J_{m_2} $ i.e. $m_1.E=0, m_2.E=0$, then $d.E=0$. Therefore,  $E\in J_{\mathrm{gcd}(m_1, m_2)}$. 

For the second part, let $E \in J_{m_1}+J_{m_2} $ i.e. $E$ has the form  $E=E_1+E_2, m_1.E_1=0, m_2.E_2=0$. Since $\mathrm{lcm}(m_1, m_2)=\frac{m_1.m_2}{\mathrm{gcd}(m_1, m_2)}$, it follows that $\mathrm{lcm}(m_1, m_2)E_1=0,\mathrm{lcm}(m_1, m_2) E_2=0 $. Hence,  $\mathrm{lcm}(m_1, m_2).(E_1+E_2)=0$ and $E~=~E_1+E_2 ~\in~ J_{\mathrm{lcm}(m_1, m_2)}$.

\end{proof}

\begin{theorem}\label{J*-hyp}
   On a hyperelliptic Riemann surface $X$, for an even integer $m=2h$, elements of $J_m$ supported on the branch set, denoted by $J_m^\ast$, are in $J_2$.  Hence, we have $J_m^\ast=J_2$.
\end{theorem}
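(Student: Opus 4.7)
The plan is to reduce the statement to the hyperelliptic relation $2 p_i \sim 2 p_j$ for any two branch points, established in Section~\ref{hyp-p}, and then handle the two inclusions separately.

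For the nontrivial inclusion $J_m^\ast \subseteq J_2$, I would take a class $[E] \in J_m^\ast$ with $E = \sum a_i p_i$ a branch-supported divisor of degree zero. Fixing a reference branch point $p_0$ and using $\sum a_i = 0$, one rewrites $E = \sum_i a_i (p_i - p_0)$. Since $2(p_i - p_0) \sim 0$ by the hyperelliptic relation, it follows immediately that $2E \sim 0$, so $[E] \in J_2$. This is essentially a one-line computation once the relation from Section~\ref{hyp-p} is in hand, and is the main content of the theorem.

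For the reverse inclusion $J_2 \subseteq J_m^\ast$, since $m = 2h$ the inclusion $J_2 \subseteq J_m$ is automatic, so the question reduces to showing that every class in $J_2$ admits a branch-supported representative. Here I would argue by counting: the classes $[p_i - p_1]$ for $i = 2, \ldots, 2g+2$ all lie in $J_m^\ast$, and the function $y/(x - e_1)^{g+1}$ (whose divisor was computed in Section~\ref{hyp-p}) supplies the relation $\sum_{i \ge 2} [p_i - p_1] = 0$. Comparing with the known order $|J_2| = 2^{2g}$ then forces these classes to exhaust $J_2$.

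The delicate step is the second inclusion, specifically verifying that no further relations exist among the differences $[p_i - p_1]$ beyond the one obtained, so that the subgroup they generate attains the full order $2^{2g}$. This can be settled either by comparing with the classical description of the $2$-torsion of a hyperelliptic Jacobian or, in the spirit of the paper, by invoking the presentation of $J_2$ promised in the introduction. The odd-degree case $y^2 = \prod_{i=1}^{2g+1}(x - e_i)$ requires no extra work: the point $(\infty, 0)$ is included as a branch point as in Section~\ref{hyp-p} and the same argument applies verbatim.
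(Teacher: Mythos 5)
Your proof of the inclusion $J_m^\ast \subseteq J_2$ is, in substance, exactly the paper's argument: write $E=\sum_i a_i(p_i-p_0)$ using $\deg E=0$ and apply $2(p_i-p_0)\sim 0$ to get $2E\sim 0$. (As you implicitly noticed, the hypothesis $mE=0$ is never needed; every degree-zero branch-supported class is automatically $2$-torsion, and the paper's own derivation of $\sum c_i=0$ from $mE=0$ is just a roundabout restatement of the degree condition.) Where you diverge is that you take the reverse inclusion $J_2\subseteq J_m^\ast$ seriously: the paper's proof establishes only the forward inclusion and then writes ``Hence, $J_m^\ast=J_2$'' with no further argument. Your counting strategy is the right way to fill that gap, and you are also right to flag its delicate point: the presentation of $J_p^\ast$ proved later in the paper (Theorem~\ref{pres} and the isomorphism $J_p^\ast\cong\mathbb{Z}_p^{r-2}$) only bounds the number of independent generators from above, i.e.\ it gives $|J_2^\ast|\le 2^{2g}$, so the equality with $J_2$ still rests on the classical fact that the map $T\mapsto\alpha(T)$ from even subsets of the branch locus onto $J_2$ is surjective (equivalently, that the $2g+1$ classes $[p_i-p_1]$ satisfy no relation beyond $\sum_{i\ge 2}[p_i-p_1]=0$). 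Citing the classical description of the $2$-torsion of a hyperelliptic Jacobian is a legitimate way to close this, and doing so makes your write-up more complete than the paper's at precisely the point where the paper is silent; the trade-off is that your version leans on an external input that the paper never makes explicit either.
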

\begin{proof}

Suppose $E=c_1a_1+\cdots + c_la_l \in J^\ast_m, a_i\in B(X)$ i.e. $m.E=0$. Since $2.a_i=2.a_j$, we can simplify $m.E=2h.E= (c_1 (2.a_1)+ \cdots + c_l(2.a_l))= h(\sum^l c_i)(2.a_1)=0$.  Therefore, $\sum^l c_i=0$.  Hence, we have $2.E=(c_1 (2.a_1)+ \cdots + c_l(2.a_l))= (\sum^l c_i)(2.a_1)=0$, therefore, $E\in J_2$.
\end{proof}

\begin{theorem}\label{J*-p}
On a $p$-gonal Riemann surface,   for a non-zero multiple of $p$, $m=h.p$, elements of $J_m$ supported on branch set, denoted by $J_m^\ast$, are in $J_p^\ast$.  Hence, we have $J_m^\ast=J_p^\ast$.
\end{theorem}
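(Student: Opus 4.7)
The plan is to imitate the argument of Theorem~\ref{J*-hyp}, with the hyperelliptic identity $2\cdot a_i \sim 2\cdot a_j$ replaced by the $p$-gonal analog $p\cdot a_i \sim p\cdot a_j$. I would establish the two inclusions $J_m^\ast \subseteq J_p^\ast$ and $J_p^\ast \subseteq J_m^\ast$ separately.

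First I would record the key linear equivalence among branch points. From Section~\ref{hyp-p}, the divisor of the meromorphic function $x-e_i$ on $X$ has the form $p\cdot a_i - (z_1+\cdots+z_p)$, where $z_1,\ldots,z_p$ are the $\pi$-preimages of $\infty$ and do not depend on $i$. Therefore $p\cdot a_i \sim z_1+\cdots+z_p \sim p\cdot a_j$ for any two ramification points.

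Next I would treat the nontrivial inclusion $J_m^\ast \subseteq J_p^\ast$. Let $E = c_1 a_1 + \cdots + c_l a_l$ be a representative of a class in $J_m^\ast$, where the $a_i$ are branch points. Since every class in $\Jac(X)$ has degree zero, $\sum_i c_i = 0$. Using the relation just established,
\[
p\cdot E \;=\; \sum_i c_i(p\cdot a_i) \;\sim\; \Bigl(\sum_i c_i\Bigr)(p\cdot a_1) \;=\; 0,
\]
so $E\in J_p$, and being supported on the branch set, $E \in J_p^\ast$. The reverse inclusion $J_p^\ast \subseteq J_m^\ast$ is automatic: $m = hp$ with $h$ a nonzero integer, so $p\cdot E \sim 0$ gives $m\cdot E = h(p\cdot E) \sim 0$.

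There is no real obstacle; the argument is structurally identical to the hyperelliptic case. The only point worth flagging is that the hypothesis $m\cdot E = 0$ is never actually invoked in the forward inclusion: every degree-zero divisor supported on the branch set already lies in $J_p^\ast$, so the two subgroups collapse to the same thing for any positive multiple $m$ of $p$, and the role of $m$ is solely to guarantee the reverse inclusion.
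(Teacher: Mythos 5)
Your proposal is correct and follows essentially the same route as the paper: both arguments rest on the single relation $p\cdot a_i \sim p\cdot a_j$ together with $\sum_i c_i = 0$ to conclude $p\cdot E \sim 0$. The only cosmetic difference is that you obtain $\sum_i c_i = 0$ directly from the degree-zero condition on Jacobian elements (correctly noting that $m\cdot E=0$ is then not needed for the forward inclusion), whereas the paper extracts it from $m\cdot E = 0$ via the same degree consideration.
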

\begin{proof}

Suppose $E=c_1a_1+\cdots + c_la_l \in J_m, a_i\in B(X)$ i.e. $m.E=0$.  Since $p.a_i=p.a_j$,  we have $m.E=h. (c_1 (p.a_1)+ \cdots + c_l(p.a_l))= h(\sum^l c_i)(p.a_1)$.  Therefore, $\sum^l c_i=0$.  Hence, we have  $p.E=(c_1 (p.a_1)+ \cdots + c_l(p.a_l))= (\sum^l c_i)(p.a_1)=0$, therefore, $E\in J_p^\ast$.
\end{proof}

\begin{theorem}
  Let $\mathrm{Jac}^\ast$ denote elements of the Jacobian of a $p$-gonal surface $X$ that can be written using branch points only.  Then $\mathrm{Jac}^\ast=J_p^\ast$.
\end{theorem}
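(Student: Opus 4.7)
The plan is to show containment in both directions, with only one direction requiring work. By definition $\mathrm{Jac}^\ast$ consists of all degree-zero divisor classes that admit a representative supported on the branch set $B(X)=\{a_1,\dots,a_r\}$, while $J_p^\ast$ is the subgroup of these classes that are additionally $p$-torsion. Thus $J_p^\ast\subseteq \mathrm{Jac}^\ast$ is immediate and the entire content of the theorem is the reverse inclusion: every degree-zero divisor supported on the branch set is automatically $p$-torsion in $\Jac(X)$.

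The main step I would carry out is the following. Take an arbitrary $E\in \mathrm{Jac}^\ast$ and write $E=c_1 a_1+\cdots+c_l a_l$ with $a_i\in B(X)$ and, because $E$ has degree zero, $\sum_{i=1}^{l} c_i=0$. The key linear equivalence I need is $p\cdot a_i\sim p\cdot a_j$ for any two branch points, and this is exactly what the excerpt already established when computing the canonical divisor on a $p$-gonal surface: the meromorphic function $x-e_i$ has divisor $p\cdot a_i-(z_1+\cdots+z_p)$, where $z_1,\dots,z_p$ are the preimages of $\infty$ under $\pi$ and therefore do not depend on $i$. Subtracting the divisors of $x-e_i$ and $x-e_j$ yields $p\cdot a_i\sim p\cdot a_j$ immediately.

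Given this, the computation mirrors the argument already used in Theorem~\ref{J*-p}. One writes
\[
p\cdot E \;=\; \sum_{i=1}^{l} c_i\,(p\cdot a_i)\;\sim\;\Bigl(\sum_{i=1}^{l} c_i\Bigr)(p\cdot a_1)\;=\;0,
\]
so $p\cdot E=0$ in $\Jac(X)$, and since $E$ is supported on $B(X)$ by hypothesis, $E\in J_p^\ast$. Combining with the trivial inclusion gives $\mathrm{Jac}^\ast=J_p^\ast$.

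There is really no main obstacle here: the theorem is a direct corollary of the relation $p\cdot a_i\sim p\cdot a_j$, which was derived in Section~\ref{hyp-p}, together with the degree-zero condition. The only mild subtlety worth stating clearly is that $\mathrm{Jac}^\ast$ is defined by a support condition on \emph{representatives}, not on all divisors in a class, so the inclusion $J_p^\ast\subseteq\mathrm{Jac}^\ast$ uses exactly the same convention on both sides and is tautological.
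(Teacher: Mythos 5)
Your proposal is correct and follows essentially the same route as the paper: the inclusion $J_p^\ast\subseteq\mathrm{Jac}^\ast$ is definitional, and the reverse inclusion comes from the relation $p\cdot a_i\sim p\cdot a_j$ together with the degree-zero condition $\sum c_i=0$, exactly as in the proof of Theorem~\ref{J*-p}. The paper's own proof merely cites the branch-point relations and asserts the conclusion; your version supplies the short computation that is implicitly intended.
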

\begin{proof}
  Obviously  $J_p^\ast\subseteq\mathrm{Jac}^\ast$ by definition. Let $E\in \mathrm{Jac}^\ast$ then by the relations $p.a_i-p.a_j=0, a_1+\cdots+a_r=ra_i, \forall i=1,\ldots, r, a_i\in B(X)$ on  the $p$-gonal surface,  it follows that $\mathrm{Jac}^\ast=J_p^\ast$.
\end{proof}
\begin{theorem}\label{J_m}
  Let $m$ be an even integer such that $m|2g-2$.  Out of the $m^{2g}$ $m$-spin structures on a hyperelliptic Riemann surface,  $2^{2g}$ of them can be written using branch points only.
\end{theorem}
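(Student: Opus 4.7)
The plan is to count branch-supported $m$-spin divisors by transporting the problem into the Jacobian via the bijection of Theorem~\ref{spin-Jm}, where the count collapses to $|J_2|$ thanks to Theorem~\ref{J*-hyp}.

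First, I would fix a convenient base $m$-spin divisor that is itself supported on branch points. The theorem preceding Section~\ref{number-of-spin} guarantees one: since $m \mid 2g-2$, setting $\theta_0 = \frac{2g-2}{m}\cdot a_0$ for a branch point $a_0 \in B(X)$ gives $m\theta_0 = K_X$, so $\theta_0 \in \mathrm{Spin}_m(X)$ and $\mathrm{supp}(\theta_0) \subseteq B(X)$.

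Next, I would invoke Theorem~\ref{spin-Jm} with this choice of base point. The map $\mu_{\theta_0}: \mathrm{Spin}_m(X)\to J_m(X)$, $D\mapsto D-\theta_0$, is a bijection. The key observation is that, because $\theta_0$ is supported on $B(X)$, an $m$-spin divisor $D$ is linearly equivalent to a divisor supported on $B(X)$ if and only if $D-\theta_0$ is linearly equivalent to such a divisor, i.e.\ if and only if $\mu_{\theta_0}(D) \in J_m^\ast$. Thus $\mu_{\theta_0}$ restricts to a bijection between branch-supported $m$-spin structures and $J_m^\ast$.

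Finally, since $m=2h$ is even by hypothesis, Theorem~\ref{J*-hyp} gives $J_m^\ast = J_2$ on the hyperelliptic surface $X$. Combining these bijections,
\begin{equation*}
\#\{D \in \mathrm{Spin}_m(X) : D \text{ supported on } B(X)\} \;=\; |J_m^\ast| \;=\; |J_2| \;=\; 2^{2g},
\end{equation*}
where the last equality uses the classical fact $J_2 \cong \mathbb{Z}_2^{2g}$ recalled at the start of Section~\ref{number-of-spin}. The only subtle point — and the step I would spell out carefully — is the equivalence ``$D$ branch-supported $\iff$ $D-\theta_0$ branch-supported,'' which requires observing that the subset of $\Jac(X)$ represented by branch-supported degree-zero divisors is a subgroup and that $\theta_0$ lies in the corresponding coset; once that is in place, the rest is just citing Theorem~\ref{J*-hyp}.
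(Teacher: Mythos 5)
Your proposal is correct and follows exactly the route the paper takes: the paper's proof is the single sentence ``This follows immediately from Theorem~\ref{spin-Jm} and Theorem~\ref{J*-hyp},'' and your argument is a careful expansion of precisely that, choosing a branch-supported base spin divisor so that the bijection $\mu_{\theta_0}$ carries branch-supported $m$-spin structures onto $J_m^\ast = J_2 \cong \mathbb{Z}_2^{2g}$. The subtlety you flag (that branch-supportedness is preserved under translation by $\theta_0$) is exactly the detail the paper leaves implicit.
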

\begin{proof}
  This follows immediately from Theorem~\ref{spin-Jm}, Theorem~\ref{J*-hyp}.
\end{proof}


Let $X$ be a $p$-gonal surface of genus $g$ defined by the equation $y^p=(x-e_1)\cdots(x-e_r)$, where $x, y$ are complex variables, $r=np, n\in \mathbb{N}$ , $p$ is a prime, and $g=(p-1)(r-2)/2$.

\begin{theorem}
 Let $m$ be a multiple of $p$ such that $m|2g-2$.  The number of $m$-spin structures supported on the branch set of a $p$-gonal surface is the same as the number of $p$-spin structures  supported on the branch set.
\end{theorem}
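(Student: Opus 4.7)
The plan is to run the bijection of Theorem~\ref{spin-Jm} once with base point an $m$-spin divisor supported on $B(X)$, and once with base point a $p$-spin divisor supported on $B(X)$, and then to identify both restricted image sets with $J_p^\ast$ via Theorem~\ref{J*-p}.

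First, I would observe that since $m$ is a multiple of $p$ with $m\mid 2g-2$, we automatically have $p\mid 2g-2$, so $p$-spin structures on $X$ exist. Moreover, the existence theorem proved earlier in this section (using $\theta=\frac{2g-2}{m}\cdot a_0$) provides canonical base points: fix a ramification point $a_0\in B(X)$ and set
\[
\theta_m=\tfrac{2g-2}{m}\cdot a_0\in \mathrm{Spin}_m(X),\qquad \theta_p=\tfrac{2g-2}{p}\cdot a_0\in \mathrm{Spin}_p(X),
\]
both of which are supported on $B(X)$.

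Next, I would specialize the map $\mu_{\theta_m}\colon \mathrm{Spin}_m(X)\to J_m(X)$, $D\mapsto D-\theta_m$, to divisors supported on the branch set. If $D$ is an $m$-spin divisor supported on $B(X)$, then $D-\theta_m$ is a difference of two divisors supported on $B(X)$, hence lies in $J_m^\ast$. Conversely, if $E\in J_m^\ast$, then $\theta_m+E$ is again supported on $B(X)$ and satisfies $m(\theta_m+E)=K_X$, so it is an $m$-spin divisor supported on $B(X)$. Since $\mu_{\theta_m}$ is a bijection on the full spin sets (Theorem~\ref{spin-Jm}), this restriction is a bijection between branch-supported $m$-spin divisors and $J_m^\ast$. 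The identical argument with $\theta_p$ in place of $\theta_m$ yields a bijection between branch-supported $p$-spin divisors and $J_p^\ast$.

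Finally, Theorem~\ref{J*-p} (applied with $h=m/p$) gives $J_m^\ast=J_p^\ast$, so the two branch-supported spin sets have the same cardinality. There is essentially no obstacle here beyond verifying that the restriction of $\mu_{\theta_m}$ really does land in $J_m^\ast$ and is surjective onto it; once those two immediate observations are made, the conclusion is a direct appeal to Theorem~\ref{J*-p}.
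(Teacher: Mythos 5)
Your proposal is correct and is essentially the paper's own argument: the paper's proof is simply the citation ``follows immediately from Theorem~\ref{spin-Jm} and Theorem~\ref{J*-p},'' and you have filled in exactly the intended details — restricting the bijection $\mu_{\theta}$ of Theorem~\ref{spin-Jm} with a branch-supported base point to identify the branch-supported $m$- and $p$-spin divisors with $J_m^\ast$ and $J_p^\ast$, then invoking $J_m^\ast=J_p^\ast$.
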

\begin{proof}
    This follows immediately from Theorem~\ref{spin-Jm}, Theorem~\ref{J*-p}.
\end{proof}
One may ask how  many  $p$-spin structures on a $p$-gonal surface  supported on the branch set there are.  In the following we answer that by giving a presentation of  $J_p^\ast$.
\begin{theorem}\label{pres}
 The group $J_p^\ast$ has the following presentation $\displaystyle J_p^\ast=\langle a_i-a_r, i=1,\ldots, r-1| p.a_i-p.a_j=0, a_1+\cdots+a_r=ra_i, \forall i=1,\ldots, r, a_i\in B(X)\rangle .$
\end{theorem}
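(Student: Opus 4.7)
The plan is to exhibit an isomorphism between the abstractly presented abelian group $G$ and $J_p^\ast$ by constructing a natural surjection and checking that its kernel is trivial. First I would set up the map: let $F_0$ denote the free abelian group on the formal symbols $x_i = a_i - a_r$ for $i=1,\ldots,r-1$, and send each $x_i$ to the class in $\Jac(X)$ of the divisor $a_i - a_r$. By the preceding theorem ($\mathrm{Jac}^\ast = J_p^\ast$), the image of this map lies in $J_p^\ast$, and surjectivity is immediate because every element of $J_p^\ast$ is a degree-zero divisor supported on $B(X)$, hence has the form $\sum c_i a_i$ with $\sum c_i = 0$, which can be rewritten as $\sum_{i=1}^{r-1} c_i(a_i - a_r)$. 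That the stated relations hold in $J_p^\ast$ was already established in Section~\ref{hyp-p}: the divisor of $(x-e_i)/(x-e_j)$ is $p.a_i - p.a_j$, giving $p(a_i-a_j)=0$, and the divisor of $y/(x-e_i)^n$ is $a_1+\cdots+a_r-r.a_i$, giving $\sum_j a_j = r.a_i$. Hence the map factors through $G$ and yields a surjection $G \twoheadrightarrow J_p^\ast$.

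The hard part will be injectivity: showing that if $E=\sum_{i=1}^{r-1} c_i(a_i-a_r)$ is principal on $X$, then the coefficients $c_1,\ldots,c_{r-1}$ are all congruent modulo $p$, so that $E$ lies in the subgroup generated by the listed relators. For this I would prove the following key lemma: if $f \in \C(X)^\times$ has divisor supported only on $B(X)$, then $f = c\cdot y^k \prod_{i=1}^r(x-e_i)^{\gamma_i}$ for some $c \in \C^\times$, $k \in \{0,\ldots,p-1\}$, and integers $\gamma_i$ satisfying $\sum \gamma_i = -kn$. The idea is to invoke the $p$-involution $j:(x,y)\mapsto(x,\zeta y)$, $\zeta = e^{2\pi i/p}$, which fixes $B(X)$ pointwise. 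Since $j$ preserves the divisor of $f$, the ratio $(f\circ j)/f$ is a meromorphic function with trivial divisor, hence a constant $\lambda \in \C^\times$. Expanding $f = \sum_{k=0}^{p-1} s_k(x)y^k$, the identity $f\circ j = \lambda f$ combined with the linear independence of $1, y, \ldots, y^{p-1}$ over $\C(x)$ forces all but one $s_k$ to vanish, so $f = s(x)y^k$; a straightforward analysis of zeros and poles of $s(x)$ at the $z_\ell$'s above $\infty$ and at non-branch fibres then yields the stated form.

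Once the lemma is in hand, the conclusion follows by direct computation. The divisor of such an $f$ at the branch point $a_i$ has multiplicity $k + p\gamma_i$, so in any principal divisor supported on $B(X)$ the coefficients are all congruent to $k \pmod p$. Consequently, writing $c_i = k + p\gamma_i$, the element $E$ decomposes as
\[
E \;=\; k\sum_{i=1}^{r-1}(a_i-a_r) \;+\; p\sum_{i=1}^{r-1}\gamma_i(a_i-a_r),
\]
where each summand is an integer combination of the relators $\sum_{i=1}^{r-1}(a_i-a_r)$ and $p(a_i-a_r)$. Hence $E=0$ in $G$, so the surjection $G\twoheadrightarrow J_p^\ast$ is an isomorphism and the presentation is established.
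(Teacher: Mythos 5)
Your proof is correct, but it takes a genuinely different (and more complete) route than the paper. The paper's argument only checks the two easy directions: that the listed relations do hold in $\Jac(X)$ (they are divisors of $(x-e_i)/(x-e_j)$ and $y/(x-e_i)^n$), and that every $E\in J_p^\ast$ is an integer combination of the generators $a_i-a_r$ (using $\sum c_i=0$ to absorb the $a_r$ term). That establishes a surjection from the presented group onto $J_p^\ast$ but never rules out extra relations, which is exactly the point you isolate as ``the hard part.'' Your key lemma --- that any $f\in\C(X)^\times$ with divisor supported on $B(X)$ has the form $c\,y^k\prod_i(x-e_i)^{\gamma_i}$ with $\sum_i\gamma_i=-kn$, proved by noting that the $p$-involution fixes $\divv(f)$ pointwise so $(f\circ j)/f$ is a constant, and then using the uniqueness of the expansion $f=\sum s_k(x)y^k$ to kill all but one term --- is the standard structure result for cyclic covers, and it gives precisely the missing injectivity: every principal divisor supported on $B(X)$ has coefficients $c_i\equiv k\pmod p$ with $\sum\gamma_i=-kn$, hence lies in the subgroup generated by $p(a_i-a_r)$ and $\sum_i(a_i-a_r)$. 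This yields the presentation honestly and makes the subsequent computation $J_p^\ast\cong\mathbb{Z}_p^{r-2}$ a formal consequence, whereas the paper's shorter argument really only proves generation. One small point worth making explicit when you write it up: at a value $b\notin\{e_1,\ldots,e_r\}$ the factors $y^k$ and $(x-e_i)^{\gamma_i}$ contribute nothing to the divisor, so the exponents of any extraneous factors $(x-b)$ in $s(x)$ must vanish outright; and the condition $\sum\gamma_i=-kn$ is exactly what cancels the fibre over $\infty$, since $\infty$ is unramified here ($r=np$).
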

\begin{proof}
 By definition $\langle a_i-a_r, i=1,\ldots, r-1| p.a_i-p.a_j=0, a_1+\cdots+a_r=ra_i, \forall i=1,\ldots, r\rangle \subseteq J_p^\ast$.
 We will show that  elements of $J_p^\ast$ can be written using the generators $ a_i-a_r$.  Let $E$ be  an element of $J_p^\ast$ i.e. $E=c_{i_1}a_{i_1}+\cdots+c_{i_k}a_{i_k}, \sum_{i=i_1}^{i_k} c_i=0, p.E=0$.  Then we have $c_{i_1}(a_{i_1}-a_r)+\cdots+c_{i_k}(a_{i_k}-a_r)=c_{i_1}a_{i_1}+\cdots+c_{i_k}a_{i_k}-(\sum_{i=i_1}^{i_k} c_i)a_r=E+0=E$.  Hence, $E\in \langle a_i-a_r, i=1,\ldots, r-1| p.a_i-p.a_j=0, a_1+\cdots+a_r=ra_i, \forall i=1,\ldots, r\rangle$.  Therefore, we have  $J_p^\ast=\langle a_i-a_r| p.a_i-p.a_j=0, a_1+\cdots+a_r=ra_i, \forall i=1,\ldots, r\rangle$.
\end{proof}
\begin{theorem}
  $J_p^\ast\cong \mathbb{Z}_p^{r-2}$.
\end{theorem}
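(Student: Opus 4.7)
The plan is to upgrade the presentation given in Theorem~\ref{pres} to an isomorphism by showing that the natural surjection onto $J_p^\ast$ has trivial kernel. Setting $b_i := a_i - a_r$ for $i = 1, \ldots, r-1$, the defining relations force $p\,b_i = 0$, and specializing $a_1 + \cdots + a_r = r\,a_i$ to the case $i = r$ gives $\sum_{i=1}^{r-1} b_i = 0$. Together these produce a surjective homomorphism
$$\pi\colon \mathbb{Z}_p^{\,r-1}/\bigl\langle(1,1,\ldots,1)\bigr\rangle \;\cong\; \mathbb{Z}_p^{\,r-2} \;\twoheadrightarrow\; J_p^\ast,$$
and the entire problem is reduced to proving that $\pi$ is injective.

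For injectivity I would establish the following characterization: a degree-zero divisor $D = \sum_{i=1}^{r} c_i\,a_i$ (with $\sum c_i = 0$) is principal on $X$ if and only if all $c_i$ share a common residue $k$ modulo $p$. The ``if'' direction is constructive: writing $c_i = k + p\,b_i$ with $\sum b_i = -nk$, the function $f = y^k \prod_{i=1}^{r}(x-e_i)^{b_i}$ has divisor precisely $D$, as one reads off from $\divv(y) = a_1+\cdots+a_r - n\sum z_j$ and $\divv(x-e_i) = p\,a_i - \sum z_j$ recorded in Section~\ref{hyp-p}. The ``only if'' direction is where the real work lies. Since the $p$-involution $j(x,y) = (x, \zeta y)$ fixes every ramification point, we have $j^\ast D = D$; so if $D = \divv(f)$, then $j^\ast f / f$ has trivial divisor and is therefore a constant which, because $j$ has order $p$, must be a $p$-th root of unity $\zeta^k$. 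Consequently $f/y^k$ is $\langle j\rangle$-invariant, hence lies in $\mathbb{C}(x) = \mathbb{C}(X)^{\langle j\rangle}$, and one writes $f = y^k\,g(x)$ for some $g \in \mathbb{C}(x)^\ast$ and $k \in \{0, 1, \ldots, p-1\}$. The requirement that $(f)$ be supported only on branch points of $X$ forces $g(x) = c\prod(x-e_i)^{b_i}$ with $\sum b_i = -nk$, and matching divisors then yields $c_i = k + p\,b_i$, so in particular $c_i \equiv k \pmod{p}$ for every $i$.

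Once the characterization is available, injectivity is immediate: any $(c_1, \ldots, c_{r-1}) \in \ker\pi$ corresponds to a principal divisor $\sum_{i<r} c_i(a_i - a_r)$, which forces all $c_i$ to share a residue modulo $p$, placing the tuple inside the subgroup $\langle(1,1,\ldots,1)\rangle$ that was already quotiented out. This finishes the isomorphism $J_p^\ast \cong \mathbb{Z}_p^{\,r-2}$. The principal obstacle is the ``only if'' half of the characterization: the cyclic symmetry argument combined with the $p$-gonal unique-representation of meromorphic functions is essential to rule out any exotic principal divisors supported on branch points beyond those of the form $y^k\prod(x-e_i)^{b_i}$.
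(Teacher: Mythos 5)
Your proof is correct, and it takes a genuinely different --- and in fact more complete --- route than the paper's. The paper's proof simply reads the answer off the presentation of Theorem~\ref{pres}: it sets $v_i=a_i-a_r$, uses the relation $a_1+\cdots+a_r=ra_r$ to eliminate one generator, and concludes that $r-2$ generators of order $p$ remain. This establishes only a surjection $\mathbb{Z}_p^{r-2}\twoheadrightarrow J_p^\ast$; the claim that each $v_i$ has order exactly $p$ and that no further relations hold among the classes $a_i-a_r$ in $\Jac(X)$ is taken for granted (equivalently, the completeness of the relation set in Theorem~\ref{pres} is asserted rather than proved). Your argument supplies precisely the missing injectivity: the characterization that a degree-zero divisor $\sum c_i a_i$ supported on the branch set is principal if and only if all $c_i$ share a common residue $k$ modulo $p$. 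Your ``if'' direction via $f=y^k\prod(x-e_i)^{b_i}$ matches the divisor computations of Section~\ref{hyp-p}, and your ``only if'' direction --- using that the $p$-involution fixes the ramification points, so any $f$ with branch-supported divisor satisfies $j^\ast f=\zeta^k f$ and hence $f=y^k g(x)$ with $g\in\mathbb{C}(x)$, forcing $g=c\prod(x-e_i)^{b_i}$ with $\sum b_i=-nk$ --- is sound. What your approach buys is a rigorous identification of the full relation lattice, at the cost of invoking the Galois structure of the $p$-gonal covering; what the paper's approach buys is brevity, at the cost of leaving the key step (no exotic principal divisors on the branch locus) unjustified.
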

\begin{proof}
  By the presentation given in Theorem~\ref{pres}, $J_p^\ast=\langle a_i-a_r,i=1,\ldots, r-1| p.a_i-p.a_j=0, a_1+\cdots+a_r=ra_i, \forall i=1,\ldots, r\rangle$.
Let $v_i=a_i-a_r, i=1,\ldots, r-1$.  Then $v_1+\cdots+v_{r-1}=a_1+\cdots+a_{r-1}-(r-1)a_r$.  Using the second relation in the presentation of $J_p^\ast$, it follows that $v_1+\cdots+v_{r-1}=0$. Therefore,  we have only $r-2$ independent generators and each is of order $p$ .  Hence,   $J_p^\ast\cong \mathbb{Z}_p^{r-2}$.
\end{proof}
\begin{corollary}
  Let $m$ be a multiple of $p$ such that $m|2g-2$.  The number of $m$-spin structures supported on the branch set of a $p$-gonal surface is $p^{r-2}$.
\end{corollary}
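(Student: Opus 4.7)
The plan is to combine the correspondence of Theorem~\ref{spin-Jm} with the structural results just established for $J_p^\ast$, and simply count.

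First, I would fix a base $m$-spin divisor $D_0$ that is supported on the branch set $B(X)$. Such a divisor exists: the earlier theorem on the existence of a branch-supported $m$-spin structure provides $D_0 = \tfrac{2g-2}{m}\cdot a_0$ for any branch point $a_0$. Next I would apply the bijection $\mu_{D_0}: \mathrm{Spin}_m(X) \longrightarrow J_m(X)$, $D \longmapsto D - D_0$, from Theorem~\ref{spin-Jm}. The key observation is that, because $D_0$ is itself supported on branch points, an $m$-spin divisor $D$ is supported on $B(X)$ if and only if its image $D - D_0$ lies in $J_m^\ast$, the subgroup of $J_m$ of classes representable by divisors supported on the branch set. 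Thus $\mu_{D_0}$ restricts to a bijection between branch-supported $m$-spin divisors and $J_m^\ast$.

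Now I invoke Theorem~\ref{J*-p}: since $m$ is a multiple of $p$, we have $J_m^\ast = J_p^\ast$. Combined with the previous theorem, which gives $J_p^\ast \cong \mathbb{Z}_p^{r-2}$, this immediately yields $|J_p^\ast| = p^{r-2}$, and hence there are exactly $p^{r-2}$ $m$-spin divisors supported on $B(X)$.

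There is essentially no obstacle here; the work has been done in the preceding theorems. The only point that deserves a sentence of care in the write-up is the reduction from ``$m$-spin divisors supported on the branch set'' to ``classes in $J_m^\ast$'', which depends on choosing the base point $D_0$ to itself be branch-supported so that the translation by $D_0$ preserves the property of being supported on $B(X)$. Once that is noted, the corollary is just the chain of equalities $\#\{\text{branch-supported } m\text{-spin}\} = |J_m^\ast| = |J_p^\ast| = p^{r-2}$.
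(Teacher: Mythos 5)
Your proof is correct and follows essentially the same route as the paper: translate by a branch-supported base spin divisor via the bijection of Theorem~\ref{spin-Jm}, identify the branch-supported spin divisors with $J_m^\ast$, and then apply $J_m^\ast=J_p^\ast\cong\mathbb{Z}_p^{r-2}$. Your explicit remark that the base divisor $D_0$ must itself be branch-supported (so that translation preserves branch-supportedness) is a detail the paper leaves implicit, and it is a worthwhile addition.
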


    \section{ Spin structures on Klein Surfaces}
\label{S:2}

In this section, we extend the definitions of spin structures on Riemann surfaces to Klein surfaces.  The main tool we use is complex doubles of Klein surfaces.  S. Natanzon has studied $m$-spin bundles on Riemann surfaces using Arf functions (functions on the space of homotopy classes of simple closed curves with values in $\mathbb{Z}_m$ ) and used that to describe moduli spaces.  His methods involves as well liftings of covering groups.  Later on, he generalized this to $m$-spin bundles on Klein surfaces, see~\cite{NP-HSK}, \cite{NP-MSH}, and \cite{NP-CSK}.  In this section, we give an equivalent definition  of $m$-spin structures on Klein surfaces using divisors.

\begin{definition}[Natanzon's definition of $m$-spin structures on Klein surfaces]
An $m$-spin structure on a Klein surface $Y=X/<\tau>$ is a pair $(L,\beta)$,
where $L$  is an $m$-spin (bundle) structure on the complex double $X$ (i.e. $ L^{\otimes m}=K$, where $K$ is the canonical line bundle on $X$ ) and $\beta$  is an anti-holomorphic
involution on  the line bundle $L$ such that the   diagram
  \begin{center}
\begin{tikzpicture}
    \node (L) at (0,0) {$L$};
    \node[right=of L] (L2) {$L$};
    \node[below=of L] (X) {$X$};
    \node[below=of L2] (X2) {$X$};
    \draw[->] (L)--(L2) node [midway,above] {$\beta$};
    \draw[->] (L2)--(X2) node [midway,right] {$\pi$};
    \draw[->] (X)--(X2) node [midway,above] {$\tau$};
    \draw[->] (L)--(X) node [midway,left] {$\pi$};
\end{tikzpicture}
\end{center}
commutes.
\end{definition}

Now we show this definition can be written using divisors.  Let $D=\sum a_i x_i$ be a divisor on a Riemann surface $X$ and  let $\tau$ be a symmetry on $X$. We define the action of $\tau$ on $D$ by $\tau(D)=\sum a_i \tau(x_i)$.
\begin{theorem}
A pair $(L,\beta)$ in the above sense is an $m$-spin structure on a Klein surface $Y=X/<\tau>$
\textbf{if and only if}

 $m.D_L\sim \bar{K}$ and  $D_L\sim \tau(D_L)$,  where $\bar{K}$ is a canonical divisor on $X$ and $D_L$ is the divisor associated with $L$.
\end{theorem}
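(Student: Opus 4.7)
The plan is to translate everything through the standard bijection between isomorphism classes of holomorphic line bundles and divisor classes up to linear equivalence, under which $L\otimes L'$ corresponds to $D_L+D_{L'}$ and the canonical bundle $K_X$ corresponds to the canonical divisor class $\bar K$. Under this dictionary $L^{\otimes m}=K$ is literally $m\cdot D_L\sim \bar K$, so the content of the theorem reduces to showing that the existence of an anti-holomorphic involution $\beta:L\to L$ covering $\tau$ is equivalent to $D_L\sim \tau(D_L)$.

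For the forward direction, I would pick a meromorphic section $s:X\to L$ with $\divv(s)=D_L$ and form $s' = \beta \circ s \circ \tau$. The map $s'$ is holomorphic, being a composition of two antiholomorphic factors and one holomorphic factor, and the commutativity of the diagram together with $\tau^2=\Id$ yields $\pi \circ s' = \Id_X$, so $s'$ is another meromorphic section of $L$. Its zeros and poles occur at the $\tau$-preimages of those of $s$, so $\divv(s') = \tau(D_L)$. Since $D_L$ and $\tau(D_L)$ are the divisors of two meromorphic sections of the same line bundle, they are linearly equivalent.

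For the converse, given a divisor $D_L$ satisfying both conditions, set $L=\mathcal{O}(D_L)$; then $L^{\otimes m}\cong \mathcal{O}(m\cdot D_L)\cong \mathcal{O}(\bar K)\cong K_X$, so $L$ is an $m$-spin bundle on $X$. To produce $\beta$, I would use that the antiholomorphic pullback $\tau^{\ast}\bar L$ is a holomorphic line bundle whose divisor class equals $\tau(D_L)$ (this is transparent from the transition-function description: the transition functions of $\tau^{\ast}\bar L$ are $\overline{g_{\alpha\beta}}\circ\tau$, and if $g_{\alpha\beta}$ is built from local defining equations of $D_L$, the zero locus gets moved by $\tau$). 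The hypothesis $D_L\sim \tau(D_L)$ then supplies a holomorphic isomorphism $\varphi:L\to \tau^{\ast}\bar L$, which, unwinding the conjugation, is equivalent datum to an anti-holomorphic bundle map $\beta:L\to L$ lifting $\tau$; the requested commutative diagram holds by construction.

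The main obstacle will be upgrading this lift to an actual involution with $\beta^2=\Id_L$. By construction $\beta^2$ is a holomorphic automorphism of $L$ covering $\tau^2=\Id_X$, hence is fiberwise multiplication by some constant $c\in \C^{\ast}$, and rescaling $\beta$ by $\lambda\in \C^{\ast}$ replaces $c$ by $|\lambda|^2 c$; this normalizes $|c|=1$. The remaining sign choice $c=\pm 1$ is the real-structure subtlety, and I would resolve it by choosing a trivialization of $L$ adapted to a $\tau$-stable representative of the divisor class $D_L$ (whose existence is exactly what $D_L\sim \tau(D_L)$ provides) and checking the sign directly on this trivialization.
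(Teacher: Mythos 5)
Your forward direction is the same argument as the paper's: take a meromorphic section $\varphi$ of $L$ with $\divv(\varphi)=D_L$, form $\beta\circ\varphi\circ\tau$, observe that the commutativity of the diagram and $\tau^2=\Id_X$ make it another meromorphic section of $L$ with divisor $\tau(D_L)$, and conclude $D_L\sim\tau(D_L)$ because two sections of one bundle have linearly equivalent divisors. (Your sentence calling $s'$ ``holomorphic'' is a slip for ``meromorphic,'' which you correct immediately.) Where you genuinely diverge is the converse. The paper's proof there is essentially a decree: it ``defines'' $\beta$ by the single condition $\pi\circ\beta(p,\xi)=\tau(p)$, which does not determine $\beta$ on fibers and whose anti-holomorphicity is never verified. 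Your route through the conjugate pullback $\tau^{*}\bar L$ --- identifying its divisor class with $\tau(D_L)$ via transition functions and converting the isomorphism $L\cong\tau^{*}\bar L$ supplied by $D_L\sim\tau(D_L)$ into an anti-holomorphic lift of $\tau$ --- is an actual construction, and it buys you something the paper does not deliver: a lift whose existence is provable rather than asserted.

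The remaining gap is the one you yourself flag: upgrading the lift to an \emph{involution}, which Natanzon's definition requires. Your analysis that $\beta^2$ is fiberwise multiplication by a constant $c$ (necessarily real, by comparing $\beta\circ\beta^2$ with $\beta^2\circ\beta$ using anti-linearity) and that rescaling $\beta$ by $\lambda$ changes $c$ to $|\lambda|^2c$ is correct, and correctly isolates the residual sign $c=\pm1$. But your proposed resolution leans on the assertion that $D_L\sim\tau(D_L)$ ``provides'' a $\tau$-stable divisor representative, and that is an overstatement: a $\tau$-invariant divisor \emph{class} need not contain a $\tau$-invariant \emph{divisor} when $\tau$ is fixed-point free (this is the classical $\Pic(X)^{\tau}$ versus real-divisor discrepancy), so the sign obstruction can genuinely survive. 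To be fair, the paper's own proof is silent on this entire issue --- it never checks $\beta^2=\Id_L$ at all --- so you are ahead of the source here; but as written your converse is complete only under an additional hypothesis (e.g., that $\tau$ has fixed points, or that the obstruction class vanishes), and that caveat should be made explicit rather than absorbed into the phrase ``exactly what $D_L\sim\tau(D_L)$ provides.''
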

\begin{proof}
Assume we have the commutative diagram. Let $D=D_L$ denote the divisor, up to linear equivalence,  associated with  the $m$-spin bundle $L$ on the Riemann surface $X$. It follows immediately that $D$ is an $m$-spin divisor on $X$, $m.D_L=\bar{K}$.   We will show  that the $D$ is invariant under $\tau$, $\tau(D)= D$.  Let $\ph:X\longrightarrow L$ be a meromorphic section of $L$ for which $(\ph)=D$.  Consider the map $\psi=\beta\circ \ph \circ \tau$, $\psi:X\longrightarrow L$.  This map is meromorphic being the composition of two anti-holomorphic maps with a meromorphic map.  Furthermore, notice that
\begin{eqnarray*}
\pi\circ \psi &=&  (\pi\circ \beta)\circ \ph \circ \tau\\
              &=& \tau \circ\pi \circ \ph \circ \tau \text{\ (because\ } \tau\circ \beta=\beta\circ \tau \text{\ from the commutativity of the diagram)} \\
              &=& \tau\circ \mathrm{id}_X\circ \tau \text{\ (because\ } \ph \text{ is a meromorphic section of } X)\\
              &=& \tau^2=\mathrm{id}_X \text {\ (because }\tau \text {\ is an involution of } X),
\end{eqnarray*} hence $\psi$ is a meromorphic section of $L$.  Moreover, notice that if $D=\sum a_i x_i$, then $\tau(D)= \sum a_i \tau(x_i)$.    Notice that, by the commutativity of the diagram, $\psi=\beta\circ \ph \circ \tau$ and $\ph \circ \tau$ have corresponding zeros and poles  and since $D=(\ph)$, then $(\psi)=\tau(D)$. Since $\ph$ and $\psi$ are meromorphic sections of the same line bundle, they have linearly equivalent divisors, $D$ and $\tau(D)$.  Therefore, the divisor  $\tau(D)$ determines the same $m$-spin structure as the divisor $D$.

  On the other hand, assume that $D$ is an invariant $m$-spin divisor under $\tau$ i.e. $m.D=\bar{K}$, $D\tau(D)=D$.  Let $L_D$ be the line bundle associated with the $m$-spin divisor $D$. It follows that $ L^{\otimes m}=K$.  Furthermore, we define $\beta:L_D\longrightarrow L_D$ to be the map that satisfies  $\pi\circ \beta (p,\xi)=\tau (p), \forall p \in X$.  This forces the diagram  \begin{center}
\begin{tikzpicture}
    \node (L) at (0,0) {$L$};
    \node[right=of L] (L2) {$L$};
    \node[below=of L] (X) {$X$};
    \node[below=of L2] (X2) {$X$};
    \draw[->] (L)--(L2) node [midway,above] {$\beta$};
    \draw[->] (L2)--(X2) node [midway,right] {$\pi$};
    \draw[->] (X)--(X2) node [midway,above] {$\tau$};
    \draw[->] (L)--(X) node [midway,left] {$\pi$};
\end{tikzpicture}
\end{center}
to commute because $\pi\circ \beta (p,\xi)=\tau (p)=\tau\circ \pi (p, \xi)$.
\end{proof}
The authors of~\cite{pablo} proved, under different assumptions, related results  to the above theorem about non-orientable complex line bundles on Klein surfaces.
\begin{corollary}Let $Y=X/<\tau>$ be a Klein surface with  a complex double $X$.
 The set of $m$-spin structures on $Y$, denoted by $\Spin_m(Y)$,  are the $\tau$-invariant $m$-spin structures on $X$. In other words, $\Spin_m(Y)=\Spin_m^\tau (X)\doteq \{ D\in \Spin_m(X)| \tau(D)=D\}$.
\end{corollary}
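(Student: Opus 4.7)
The plan is to treat this as an essentially immediate consequence of the preceding theorem. That theorem establishes an equivalence between pairs $(L,\beta)$ satisfying Natanzon's commutative diagram on the complex double $X$ and divisors $D$ on $X$ satisfying both $m \cdot D \sim \bar K$ and $\tau(D) \sim D$; the corollary simply repackages this equivalence as a set-level identification.

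First I would observe that, via the standard correspondence between linear equivalence classes of divisors and isomorphism classes of line bundles set up in Section~2, the condition $m \cdot D \sim \bar K$ is precisely the condition $D \in \Spin_m(X)$. Next, the condition $\tau(D) \sim D$ is by definition $\tau$-invariance in the Jacobian, so, viewing $\Spin_m(X)$ as a subset of $\Div(X)/\PDiv(X)$, it is exactly the condition $D \in \Spin_m^\tau(X)$. Combining these two observations with the ``if and only if'' of the preceding theorem, the assignment $(L,\beta) \mapsto [D_L]$ sends Natanzon $m$-spin structures on $Y$ bijectively onto $\Spin_m^\tau(X)$.

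The only point that would need a brief comment in a written proof is that the map $(L,\beta) \mapsto [D_L]$ genuinely descends to isomorphism classes of pairs on one side and linear equivalence classes of divisors on the other. This is immediate: $D_L$ is constructed as the divisor of a meromorphic section of $L$ and is therefore well-defined up to linear equivalence, while changing $(L,\beta)$ by an isomorphism replaces the chosen section by a scalar multiple, which does not alter its divisor class. There is no serious obstacle, since the corollary is essentially a change of notation once Theorem~\ref{spin-Jm} and the preceding theorem are in hand.
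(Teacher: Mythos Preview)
Your proposal is correct and matches the paper's treatment: the paper gives no separate proof for this corollary, treating it as an immediate restatement of the preceding theorem in divisor language, which is exactly what you do. Your additional remark about the assignment $(L,\beta)\mapsto [D_L]$ descending to isomorphism classes is a reasonable clarification but is not present in (nor needed by) the paper.
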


\begin{example}\label{klein-example}
  Let $X$ be the hyperelliptic surface of genus $3$ defined by $y^2=z^7-z$.  The branch points are $-1,0,1,\infty, \xi_1=e^{\pi i/3}, \overline{\xi_1}, \xi_2=e^{2\pi i/3}, \overline{\xi_2}$. The canonical divisor is $K=4.\infty$. The Riemann surface $X$ has $64$ $2$-spin structures. Consider the involution $\tau:X\longrightarrow X, x\longmapsto \overline{x}$ on $X$.  The Klein surface $Y=X/<\tau>$ is non-orientable, see \cite{symmetries}.  The action of $\tau$ on the set of spin structures of $X$ leaves fixed, up to equivalence, the following divisors
$\theta_1=2.\infty,\  \theta_2=\infty+1,\  \theta_3=\infty+-1,\  \theta_4=\infty+0,\
  \theta_5=1+0,\  \theta_6=1+-1,\  \theta_7=0+-1,\ \theta_8=\xi_1+\overline{\xi_1},\
  \theta_9=\xi_2+\overline{\xi_2},\ \theta_{10}=-\infty+0+1+-1,\ \theta_{11}=-\infty+0+\xi_1+\overline{\xi_1},\ \theta_{12}=-\infty+0+\xi_2+\overline{\xi_2},
  \theta_{13}=-\infty+1+\xi_1+\overline{\xi_1},\  \theta_{14}=-\infty+1+\xi_2+\overline{\xi_2},\  \theta_{15}=-\infty+-1+\xi_1\overline{\xi_1},\  \theta_{16}=-\infty+-1+\xi_2+\overline{\xi_2}$.
Therefore, these $16$ divisors are the only $2$-spin structures on $Y$. This number of spin structures on $Y$ agrees with Natanzon's, see \cite{NP-HSK,NP-CSK}.
\end{example}

\section{$m$-Spin Structures on  Klein Surfaces with  Hyperelliptic Complex Doubles }\label{Klein-spin}
In Example~\ref{klein-example}, we determined and counted the number of $m$-spin divisors for a Klein surface whose complex double is hyperelliptic.  In the following, we  study the action of anti-holomorphic involutions on the $2$-torsion subgroup $J_2$ and determine the size of its invariant subgroup under certain involutions.
For a detailed classification of Klein Surfaces whose complex doubles is hyperelliptic, see~\cite{cirre}.  For a classification of automorphisms of hyperelliptic surfaces, see~\cite{symmetries}.   
\begin{theorem}\label{tau-spin}
  Let $X$ be a hyperelliptic surface with a defining polynomial whose coefficients are real and not all of its roots  are  real.  Assume $X$ has a symmetry $\tau$ induced by complex conjugation.  Then $\tau$ fixes $2^{g+k-1}$ \ $2$-spin structures on $X$, where $2k$ is the number of the real branch points.  In particular, the Klein surface $X/<\tau>$ has only $2^{g+k-1}$ \ $2$-spin structures.
\end{theorem}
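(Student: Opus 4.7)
The plan is to use Theorem~\ref{spin-Jm} to translate counting $\tau$-invariant $2$-spin divisors on $X$ into counting $\tau$-invariant elements of the $2$-torsion group $J_2$, and then exploit the branch-point presentation from Theorems~\ref{J*-hyp} and~\ref{pres} to perform the latter count combinatorially.

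First I would fix a $\tau$-invariant base $2$-spin divisor. Because the defining polynomial has real coefficients, $\tau$ permutes $B(X)$, fixing the $2k$ real branch points and swapping the remaining ones in $g+1-k$ conjugate pairs (so $r=2g+2=2k+2(g+1-k)$). Assuming $k\ge 1$ (automatic in the odd-degree case, where $\infty$ is a real branch point), pick a real branch point $a_0$ and set $D_0=(g-1)\,a_0$. The canonical divisor formula in Section~\ref{hyp-p} gives $2D_0=(2g-2)a_0=K_X$, so $D_0\in\Spin_2(X)$, and $\tau(D_0)=D_0$. The bijection $\mu_{D_0}\colon D\mapsto D-D_0$ of Theorem~\ref{spin-Jm} then intertwines the $\tau$-actions on $\Spin_2(X)$ and $J_2$, reducing the problem to showing $|J_2^\tau|=2^{g+k-1}$.

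Next I would extract the $\tau$-action on $J_2$ from the presentation of Theorem~\ref{pres}. With the other branch points labelled $a_1,\ldots,a_{r-1}$ and $v_i=a_i-a_0$, let $F=\mathbb{F}_2\langle v_1,\ldots,v_{r-1}\rangle$ and $r_0=v_1+\cdots+v_{r-1}$, so that $J_2\cong F/\langle r_0\rangle$. Since $a_0$ is $\tau$-fixed, $\tau$ acts on $F$ by the permutation $\tau(v_i)=v_{\sigma(i)}$, where $\sigma$ is the restriction to $\{1,\ldots,r-1\}$ of the involution on $\{1,\ldots,r\}$ induced by $\tau$; this $\sigma$ has $2k-1$ fixed indices (the real branch points other than $a_0$) and $g+1-k$ transpositions (the conjugate pairs). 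A standard count of $\tau$-fixed vectors in a permutation $\mathbb{F}_2$-module gives $\dim_{\mathbb{F}_2}F^\tau=(2k-1)+(g+1-k)=g+k$.

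The main point is then to descend to the quotient. Clearly $r_0\in F^\tau$ and $r_0\ne 0$ in $F$, so $F^\tau/\langle r_0\rangle$ has dimension $g+k-1$ and injects into $J_2^\tau$. To see that this injection is surjective, observe that any class in $J_2^\tau$ is represented by some $w=\sum c_i v_i\in F$ with $\tau(w)-w\in\{0,r_0\}$; in the nontrivial case $\tau(w)-w=r_0$, comparing coefficients of each $v_i$ forces $c_{\sigma(i)}-c_i\equiv 1\pmod 2$, which applied to any $\sigma$-fixed index (such exists because $k\ge 1$ gives $2k-1\ge 1$) yields $0\equiv 1$, a contradiction. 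Hence $J_2^\tau\cong F^\tau/\langle r_0\rangle$ has dimension $g+k-1$, giving $|\Spin_2^\tau(X)|=|J_2^\tau|=2^{g+k-1}$. The identification of $\Spin_2(X/\langle\tau\rangle)$ with $\Spin_2^\tau(X)$ from the corollary preceding Example~\ref{klein-example} then completes the proof.
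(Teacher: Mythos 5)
Your proof is correct and follows essentially the same route as the paper: reduce to counting $J_2^\tau$ via the branch-point presentation, count one generator per $\tau$-orbit ($2k-1$ real points plus $g+1-k$ conjugate pairs, giving $g+k$), and subtract one for the relation $v_1+\cdots+v_{r-1}=0$. The only difference is that you explicitly verify two points the paper takes for granted --- that the bijection $\mu_{D_0}$ can be chosen $\tau$-equivariantly, and that every $\tau$-invariant \emph{class} in $J_2$ actually lifts to a $\tau$-invariant representative in $F$ (ruling out $\tau(w)-w=r_0$ via a $\sigma$-fixed index) --- which makes your write-up more rigorous but not a different argument.
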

\begin{proof}First, we study the action of $\tau$ on $J_2$.
By Theorem~\ref{J*-hyp} and Theorem~\ref{pres}, the group $J_2$ has the following presentation $J_2=\langle p_i-\infty,  i=1,\ldots, 2g+1| 2.p_i-2.\infty=0, p_1+\cdots+p_{2g+1}=(2g+1).\infty, \forall i=1,\ldots, 2g+1\rangle .$
Let $a_i, i=1, \ldots, 2k$ denote the set of  the $2k$ real branch points and by convention the point at $\infty$ is real,$a_1=\infty$. Since the defining polynomial has real coefficients, the number of non-real branch points is, $2g+2-2k$, even. Let $b_j, j=1, 2g+2-2k$ denote the non-real branch points.  Moreover, if $b_j$ is a root, then so is its conjugates, $\overline{b_j}$.  Therefore, the non-real branch points are $b_j,\overline{b_j}, j=1, \ldots,g+1-k$.

Note that if $E\in J_2$ consists of only real points, then $\tau(E)=E$.  Moreover, if $E$ contains points that are not real, provided that if a point appears then so does its conjugate, then $\tau(E)=E$.  Therefore, the group of invariant $J_2$ elements under $\tau$, denoted by $J_2^\tau$, is
given by
$J_2^\tau=\langle v_i, w_j, i=2,\ldots, 2k, j=1,\ldots, g+1-k| 2.v_i=0, 2.w_j=0, v_2+\cdots+v_{2k}+w_1+\cdots+w_{g+1-k}=0, \forall i=2,\ldots, 2k, j=1,\ldots, g+1-k\rangle $
where $v_i=a_i-\infty, i=2,\ldots,2k$ and $w_i=b_i+\overline{b_i}-2\infty, i=i=2k+1, g+1$.  The second relation allows us to get rid of one generator.  Hence, $J_2^\tau$ has only $g+k-1$ independent generators each of order $2$.  Therefore,  $J_2^\tau \cong \mathbb{Z}_2^{g+k-1}$.

The proof remains the same if  $\infty$ was not a branch point.
\end{proof}
Our number agrees with that of Natanzon's in  \cite{NP-HSK}.
\begin{corollary}
Let $X$ be a hyperelliptic surface with a defining polynomial whose coefficients are real and some of its roots  are not real.  Assume $X$ has a symmetry $\tau$ induced by complex conjugation.
  Let $m$ be an even integer such that $m|2g-2$.  Out of the $m^{2g}$ $m$-spin structures on $X$,  $2^{2g}$ of them can be written using branch points only from which $\tau$ fixes $2^{g+k-1}$, where $2k$ is the number of the real branch points.  In particular, the Klein surface $X/<\tau>$ has only $2^{g+k-1}$\  $m$-spin structures in terms of branch points.
\end{corollary}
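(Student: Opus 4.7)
The plan is to reduce the corollary to a combination of Theorem~\ref{J_m} (which identifies the branch-supported $m$-spin divisors with a torsor over $J_m^\ast = J_2$ of size $2^{2g}$) and Theorem~\ref{tau-spin} (which counts $|J_2^\tau| = 2^{g+k-1}$). The first assertion of the corollary---that $2^{2g}$ of the $m^{2g}$ $m$-spin structures on $X$ are branch-supported---is exactly Theorem~\ref{J_m}, so the only genuine task is to refine this to the $\tau$-fixed subset.

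To do so, I would first exhibit a distinguished $\tau$-invariant branch-supported $m$-spin divisor $D_0$. Following the convention that $\infty$ counts as real (as in Theorem~\ref{tau-spin}), take $D_0 = \frac{2g-2}{m}\cdot\infty$, which makes sense because $m\mid 2g-2$; it satisfies $mD_0 = (2g-2)\cdot\infty \sim K_X$, so $D_0 \in \mathrm{Spin}_m(X)$, and $\tau(\infty) = \infty$ gives $\tau(D_0) = D_0$ on the nose. By the torsor structure from the proof of Theorem~\ref{spin-Jm} together with Theorem~\ref{J*-hyp}, the map $D \mapsto D - D_0$ is a bijection from branch-supported $m$-spin divisors onto $J_m^\ast = J_2$; because $D_0$ is $\tau$-invariant, this bijection is $\tau$-equivariant, since $\tau(D) - D_0 = \tau(D - D_0)$. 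Hence $\tau$-fixed branch-supported $m$-spin divisors on $X$ are in bijection with $J_2^\tau$, and Theorem~\ref{tau-spin} gives the count $2^{g+k-1}$.

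The ``in particular'' clause then follows from the corollary to the commutative-diagram theorem in Section~\ref{S:2}, which identifies $m$-spin structures on the Klein surface $X/\langle\tau\rangle$ with the $\tau$-invariant $m$-spin structures on its complex double $X$; restricting to the branch-supported ones preserves this identification. The one step that requires care is the choice of a $\tau$-invariant base point $D_0$: without such a choice, the induced $\tau$-action on $J_2$ would be affine rather than linear, and the fixed-point count could a priori be either $0$ or $|J_2^\tau|$. The convention $a_1 = \infty$ from Theorem~\ref{tau-spin} supplies a canonical $\tau$-invariant $D_0$ and makes the identification clean.
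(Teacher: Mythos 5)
Your proof is correct and takes essentially the same route as the paper, whose own argument is the one-line observation that the claim follows from Theorem~\ref{tau-spin} together with Theorem~\ref{J_m}. Your extra step---choosing a $\tau$-invariant base point $D_0$ so that the identification of branch-supported $m$-spin divisors with $J_2$ becomes $\tau$-equivariant---supplies a detail the paper leaves implicit, and is exactly the right thing to worry about.
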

\begin{proof}
  This follows immediately from the above theorem and from Theorem~\ref{J_m}.
\end{proof}
\begin{theorem}\label{tau-spin-2}
  Let $X$ be a hyperelliptic surface of an odd genus, $g$.  Let $\tau$ be an involution on $X$ induced by the antipodal map $-1/\overline{z}$.  Then the number of invariant $2$-spin structures of $X$ under $\tau$ is $2^g$.   In particular, the Klein surface $X/<\tau>$ has only $2^{g}$ \ $2$-spin structures.
\end{theorem}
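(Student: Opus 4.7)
The plan is to follow the template of Theorem~\ref{tau-spin}: present $J_2$ via branch points using Theorem~\ref{pres}, identify the $\tau$-invariant subgroup, and translate to spin divisors via Theorem~\ref{spin-Jm}.

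First, I record the branch-point combinatorics. Since $\iota(z)=-1/\overline z$ is fixed-point-free on $\hC$, the $2g+2$ branch points of $X$ split into $g+1$ antipodal pairs $\{a_i,\tilde a_i\}$ with $\tau(a_i)=\tilde a_i$; fix $p_0=a_{g+1}$ and $\tilde p_0=\tilde a_{g+1}$. The divisor $\theta_0=(g-1)p_0$ is a $2$-spin divisor since $2\theta_0=(2g-2)p_0=K_X$, and because $g$ is odd so that $g-1$ is even, we have $\tau(\theta_0)-\theta_0=(g-1)(\tilde p_0-p_0)\sim 0$. Hence $\theta_0\in\Spin_2^\tau(X)$, the bijection $\mu_{\theta_0}$ of Theorem~\ref{spin-Jm} becomes $\tau$-equivariant, and $|\Spin_2^\tau(X)|=|J_2^\tau|$.

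Next, I would compute $J_2^\tau$ using generators analogous to the $w_j$ of Theorem~\ref{tau-spin}: set
\[
w_i=a_i+\tilde a_i-2p_0\ (i=1,\dots,g),\qquad w_{g+1}=\tilde p_0-p_0.
\]
Each is $\tau$-invariant in $J_2$ (for $w_{g+1}$ use $\tau(w_{g+1})=-w_{g+1}=w_{g+1}$ modulo $2$-torsion; for the others use $2p_0\sim 2\tilde p_0$). The presentation relation $\sum_{i=1}^{2g+2}a_i\sim(2g+2)p_0$ from Theorem~\ref{pres} translates into $\sum_{i=1}^{g+1}w_i=0$, so among $g+1$ order-$2$ generators only $g$ are independent, giving $\langle w_1,\dots,w_{g+1}\rangle\cong\mathbb Z_2^g$ and the claimed $2^g$ invariants.

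The delicate step I expect to grapple with is verifying that the $w_i$ exhaust $J_2^\tau$: a direct $\mathbb F_2$-attack on the presentation allows ``twisted'' candidates $v=\sum\varepsilon_i a_i-(g+1)p_0$ (one point from each pair) which satisfy $\tau(v)+v=(g+1)(\tilde p_0-p_0)$, trivial in $J_2$ precisely because $g$ is odd, yet $v$ sits outside $\langle w_1,\dots,w_{g+1}\rangle$. To rule these out and reach $2^g$ rather than $2^{g+1}$, I would invoke Natanzon's requirement that the spin bundle carry an anti-holomorphic involution $\beta$ with $\beta^2=\mathrm{id}$, showing that the twisted classes admit only a quaternionic lift ($\beta^2=-\mathrm{id}$). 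This real-versus-quaternionic dichotomy, rather than a purely divisor-theoretic manipulation, is the main obstacle and is where the oddness of $g$ does its real work.
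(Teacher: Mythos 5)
Your first two paragraphs follow the paper's own route exactly: present $J_2$ on the branch points via Theorem~\ref{pres}, note that the $g+1$ antipodal pairs give $g+1$ order-two generators $w_i$ subject to the single relation $\sum_i w_i=0$, and obtain a subgroup $\langle w_1,\dots,w_{g+1}\rangle\cong\mathbb{Z}_2^{g}$ of $J_2^\tau$. Your added observation that $\theta_0=(g-1)p_0$ is itself a $\tau$-invariant spin divisor (using that $g-1$ is even and $2p_0\sim 2\tilde p_0$), so that $\Spin_2^\tau(X)$ is a nonempty torsor over $J_2^\tau$, is correct and is a step the paper leaves implicit.

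The real issue is the one you flag in your final paragraph, and it should not be treated as a technicality to be outsourced. The ``twisted'' classes $v=\sum_i\varepsilon_i a_i-(g+1)p_0$, taking one point from each antipodal pair, genuinely are $\tau$-invariant elements of $J_2$ when $g$ is odd (your computation $\tau(v)+v=(g+1)(\tilde p_0-p_0)\sim 0$ is right), and under the isomorphism $J_2\cong E_g$ recorded in Section~\ref{mumf} they correspond to transversal subsets of the branch set, which are never equivalent, even up to complementation, to a union of antipodal pairs. Hence $J_2^\tau$ has order $2^{g+1}$, not $2^{g}$; one can verify this by hand for $g=1$ or $g=3$, where the transversals contribute a second copy of $2^{g}$ fixed classes. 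The paper's proof does not meet this point: it simply asserts that $\tau$ fixes only those elements of $J_2$ in which a point and its antipodal image both appear, which is precisely the claim your twisted classes refute. Moreover, your proposed escape via the real-versus-quaternionic dichotomy for the lift $\beta$ cannot repair the count within the paper's own framework, because the corollary in Section~\ref{S:2} defines $\Spin_2(Y)$ as the set of $\tau$-invariant spin divisors on $X$, and your own first paragraph shows that this set is in bijection with $J_2^\tau$. So either the count should read $2^{g+1}$, or the definition of a spin structure on $Y$ must be strengthened to require an involutive lift $\beta$, in which case the lifting obstruction for the transversal classes would actually have to be computed --- something neither your sketch nor the paper does. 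As written your proposal is therefore incomplete, but it correctly locates a gap that the paper's proof shares and does not acknowledge.
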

\begin{proof}
The antipodal map  fixes no branch points.  We study the action of $\tau$ on $J_2$.  By Theorem~\ref{J*-hyp} and Theorem~\ref{pres}, the group $J_2$ has the following presentation $J_2=\langle p_i-\infty,  i=1,\ldots, 2g+1| 2.p_i-2.\infty=0, p_1+\cdots+p_{2g+1}=(2g+1).\infty, \forall i=1,\ldots, 2g+1\rangle .$

The antipodal involution only fixes elements of $J_2$ for which both a point and its antipodal appear.  Therefore, $\tau$ fixes elements of the type $a_i+\tau(a_i)-2.\infty$. Hence, generators for  the group of invariant $J_2$ elements under $\tau$, denoted by $J_2^\tau$, are
given by $a_i+\tau(a_i)-2.\infty$.  In other words, a presentation for $J_2^\tau$ is given by $J_2^\tau=<v_i, i=1,\cdots, g|2.v_i=0>$.  Hence, $J_2^\tau\cong \mathbb{Z}_2^{g}$.  If  $\infty$ was not a branch point,  but rather we had a branch point $p_{2g+2}$, then the  proof remains the same.

\end{proof}

\begin{corollary}
 Let $X$ be a hyperelliptic surface of an odd genus, $g$.  Let $\tau$ be an involution on $X$ induced by the antipodal map $-1/\overline{z}$.
  Let $m$ be an even integer such that $m|2g-2$.  Out of the $m^{2g}$ $m$-spin structures on $X$,  $2^{2g}$ of them can be written using branch points only from which $\tau$ fixes $2^{g}$.  In particular, the Klein surface $X/<\tau>$ has only $2^{g}$\  $m$-spin structures in terms of branch points.
\end{corollary}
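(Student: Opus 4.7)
My plan is to derive this corollary as a direct consequence of the two preceding results, in the same style as the earlier Klein-surface corollary for complex conjugation. Theorem~\ref{J_m} already tells me that for even $m$ with $m \mid 2g-2$ there are exactly $2^{2g}$ $m$-spin divisors on $X$ supported on the branch set, because Theorem~\ref{J*-hyp} identifies $J_m^\ast$ with $J_2$, and the correspondence of Theorem~\ref{spin-Jm} then realises $\Spin_m^\ast(X)$ as a $J_2$-torsor of size $2^{2g}$. All that remains is to count the $\tau$-fixed points inside this torsor.

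The key step is to produce a $\tau$-invariant base point $D_0 \in \Spin_m^\ast(X)$. Once such a $D_0$ is in hand, the bijection $\mu_{D_0}\colon \Spin_m^\ast(X) \to J_2$ becomes $\tau$-equivariant, so the $\tau$-fixed $m$-spin divisors supported on branch points correspond bijectively to $J_2^\tau$, and Theorem~\ref{tau-spin-2} delivers $|J_2^\tau| = 2^g$. Combined with the earlier corollary that identifies $m$-spin structures on $Y = X/\langle\tau\rangle$ with $\tau$-invariant $m$-spin divisors on $X$, this yields the count $2^g$ claimed in the statement.

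The main obstacle is therefore the existence of a $\tau$-invariant basepoint in $\Spin_m^\ast(X)$. I would handle this by starting from a $\tau$-invariant element $\theta \in \Spin_2^\ast(X)$ (which is guaranteed by Theorem~\ref{tau-spin-2}) and lifting it along the natural $\tau$-equivariant map $D \mapsto hD$ from $\Spin_m^\ast(X)$ to $\Spin_2^\ast(X)$, where $m = 2h$. Equivalently, phrased in the torsor picture, the obstruction to finding a $\tau$-invariant basepoint is the class $\delta = \tau D_0' - D_0' \in J_2$ associated with any choice of $D_0' \in \Spin_m^\ast(X)$; one needs $\delta \in \mathrm{Im}(1-\tau)$ on $J_2$, and this can be verified directly from the presentation of $J_2$ supplied by Theorem~\ref{pres} together with the fact that the antipodal map pairs the $2g+2$ branch points without fixed points into pairs $\{a_j, \tau a_j\}$. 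Once this cohomological obstruction is shown to vanish, the number of $\tau$-fixed elements of $\Spin_m^\ast(X)$ is exactly $|J_2^\tau| = 2^g$, and the final sentence of the corollary about $m$-spin structures on $X/\langle\tau\rangle$ supported on the branch set follows immediately from the Klein-surface correspondence established in Section~\ref{S:2}.
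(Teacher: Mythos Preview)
Your proposal follows the same overall strategy as the paper --- combine Theorem~\ref{J_m} with Theorem~\ref{tau-spin-2} --- but the paper's proof is literally the one-liner ``This follows immediately from the above theorem and from Theorem~\ref{J_m}.''  You go further by isolating a point the paper glosses over: the bijection $\mu_{D_0}\colon \Spin_m^\ast(X)\to J_m^\ast=J_2$ depends on a choice of basepoint, and deducing $|(\Spin_m^\ast)^\tau|=|J_2^\tau|$ requires a $\tau$-invariant $D_0$ (equivalently, that the class $\delta=\tau D_0'-D_0'$ lies in $\mathrm{Im}(1-\tau)$ on $J_2$).

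Your handling of this is correct.  One clean way to finish the verification you sketch: since $J_2$ is $2$-torsion, $\tau\delta=\tau^2D_0'-\tau D_0'=-\delta=\delta$, so $\delta\in J_2^\tau$.  On the other hand $\mathrm{Im}(1-\tau)\subseteq\ker(1+\tau)=\ker(1-\tau)=J_2^\tau$, and comparing orders, $|\mathrm{Im}(1-\tau)|=|J_2|/|J_2^\tau|=2^{2g}/2^g=2^g=|J_2^\tau|$, whence $\mathrm{Im}(1-\tau)=J_2^\tau$ and $\delta\in\mathrm{Im}(1-\tau)$ automatically.  So a $\tau$-invariant basepoint exists, $\mu_{D_0}$ is $\tau$-equivariant, and the fixed-point count $2^g$ follows.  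Your argument is thus aligned with the paper's but strictly more careful.
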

\begin{proof}
  This follows immediately from the above theorem and from Theorem~\ref{J_m}.
\end{proof}
\section{Invariant $m$-Spin Structures on a Hyperelliptic Surface under an Automorphism  }\label{invariant}
  According to~\cite{symmetries}, since a hyperelliptic surface $X$ is a branched cover of the Riemann sphere, automorphisms of a hyperelliptic surface  are  closely related to M\"obius transformations of the Riemann sphere.
It turns out that the group generated by the hyperelliptic involution of a hyperelliptic surface $I_2:X\longrightarrow X$ is a central normal subgroup of  $\mathrm{Aut}(X)$ and $<I_2>\cong \mathbb{Z}_2$.  Furthermore, the group $\mathrm{Aut}(X)/<I_2>$ is a M\"obius group and $\mathrm{Aut}(X)/<I_2>=\{f\in \mathrm{Aut}(\hC)| f(B)=B\}$, where $B$ denotes the branch set of the hyperelliptic surface.  Therefore, when we speak of such automorphisms, we will think of them as (lifts of) M\"obius transformations of the sphere that leaves fixed the branch set.

In this section, we will study invariant $m$-spin divisors under an odd order automorphism of a hyperelliptic surface.  Similar calculations can be done for even order automorphisms.  Similarly, we can extend these calculations to $p$-gonal surfaces.
\begin{theorem}
\label{none-fixed}
  Let $f$ be an automorphism of a hyperelliptic Riemann surface of genus $g$.  Assume $f$ has an odd order $n$.  Assume $f$ fixes no branch points.
  Then the  group of invariant elements of  $J_2$ under $f$, denoted by $J_2^f$, is isomorphic to $\mathbb{Z}_2^{\frac{2g+2}{n}-2}$.  In other words, $f$ fixes $2^{\frac{2g+2}{n}-2}$\ $2$-spin structures.
\end{theorem}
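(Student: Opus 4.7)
The plan is to describe $J_2$ as a group of even-cardinality subsets of the branch set modulo complementation, translate the action of $f$ into a permutation on those subsets, and count the invariants.

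Setup: Since $f$ has odd order $n$ and fixes no branch points, the induced permutation $\sigma$ on $B(X)$ has all orbits of odd length $>1$; for $(2g+2)/n$ to coincide with the orbit count we read the hypothesis as saying each orbit has length exactly $n$, so there are $k:=(2g+2)/n$ orbits $O_{1},\ldots,O_{k}$. Combining Theorem~\ref{J*-hyp} (with $h=1$) and Theorem~\ref{pres} (with $p=2$), I would identify $J_{2}=J_{2}^{\ast}$ with the group of pairs $\{S,\,B(X)\setminus S\}$ of even-cardinality subsets of $B(X)$ under symmetric difference: a class is represented by $\sum_{a\in S}a-|S|\cdot a_{r}$, and the relation $a_{1}+\cdots+a_{r}=r\cdot a_{r}$ is exactly what identifies $S$ with $B(X)\setminus S$.

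Under this identification, a class is $f$-invariant precisely when $\sigma(S)\in\{S,\,B(X)\setminus S\}$. The odd order of $\sigma$ rules out $\sigma(S)=B(X)\setminus S$: applying $\sigma$ twice gives $\sigma^{2}(S)=S$, and since $\gcd(2,n)=1$, $\sigma=(\sigma^{2})^{(n+1)/2}$ gives $\sigma(S)=S$ as well; together these force $S=B(X)\setminus S$, impossible as $B(X)$ is nonempty. The remaining case $\sigma(S)=S$ forces $S=\bigcup_{i\in I}O_{i}$ for some $I\subseteq\{1,\ldots,k\}$; since $n$ is odd, $|S|=n|I|$ is even iff $|I|$ is even, giving $2^{k-1}$ subsets.

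Halving to account for the identification $S\sim B(X)\setminus S$ (which never fixes an even subset, as just shown) yields $|J_{2}^{f}|=2^{k-2}=2^{(2g+2)/n-2}$. Since $J_{2}^{f}$ is a subgroup of the elementary abelian group $J_{2}\cong\Z_{2}^{2g}$, it is itself elementary abelian, hence $J_{2}^{f}\cong\Z_{2}^{(2g+2)/n-2}$; the count of fixed $2$-spin divisors then follows from Theorem~\ref{spin-Jm}. The main obstacle I expect is the complementation bookkeeping together with carefully justifying that every orbit has length exactly $n$ rather than merely a proper odd divisor of $n$, which is tacit in the statement but needed for $(2g+2)/n$ to equal the orbit count.
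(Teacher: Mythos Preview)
Your argument is correct and takes a genuinely different route from the paper's. The paper works directly with the divisor presentation of $J_2$: it writes down the orbit-sum generators $v_i=p_i+f(p_i)+\cdots+f^{n-1}(p_i)-(p_{2g+2}+f(p_{2g+2})+\cdots+f^{n-1}(p_{2g+2}))$, checks that $\sum v_i=0$ using the relation $p_1+\cdots+p_{2g+2}=(2g+2)p_{2g+2}$, and concludes there are $(2g+2)/n-2$ independent order-$2$ generators. You instead invoke the isomorphism $J_2\cong E_g$ (even subsets of $B(X)$ modulo complementation, from the Mumford section) and count $f$-invariant classes combinatorially. Your odd-order trick ruling out $\sigma(S)=B(X)\setminus S$ is a nice touch that the paper glosses over; conversely, the paper's approach has the advantage of exhibiting explicit generators of $J_2^f$. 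As for your flagged obstacle about orbit lengths: it is resolved by recalling (from Section~\ref{invariant}) that $f$ descends to a M\"obius transformation of order $n$, hence is conjugate to a rotation, so every non-fixed point of the sphere lies in an orbit of length exactly $n$; the paper makes the same tacit assumption.
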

This theorem was proved using different methods in \cite{invariant}.  We prove it here using similar methods to the ones we used to prove Theorems~\ref{tau-spin} and ~\ref{tau-spin-2}.
\begin{proof}
Recall that  the group $J_2$ has the following presentation
$J_2=\langle p_i-p_{2g+2},  i=1,\ldots, 2g+2| 2.p_i-2.p_{2g+2}=0, p_1+\cdots+p_{2g+2}=(2g+2).p_{2g+2}, \forall i=1,\ldots, 2g+2\rangle.$
  Elements of $J_2$ that are fixed by $f$ must contain a point and its orbit,  $p_i+f(p_i)+\cdots+f^{n-1}(p_i)-(p_{j}+f(p_{j})+\cdots+f^{n-1}(p_{j}))$.  Since $f$ doesn't fix any branch points, we have $\frac{2g+2}{n}$ orbits.  Without loss of generality, we may assume that the branch points are ordered so that $p_1,\ldots, p_{\frac{2g+2}{n}-1},  p_{2g+2}$ are representatives of the $\frac{2g+2}{n}$ orbits.  Let $v_i=p_i+f(p_i)+\cdots+f^{n-1}(p_i)-(p_{2g+2}+f(p_{2g+2})+\cdots+f^{n-1}(p_{2g+2})), i=1,\ldots, \frac{2g+2}{n}-1,  2g+2$.  Then $ v_1+\cdots+ v_{\frac{2g+2}{n}-1}+v_{2g+2}=p_1+\cdots+p_{2g+2}-\frac{2g+2}{n}.((p_{2g+2}+f(p_{2g+2})+\cdots+f^{n-1}(p_{2g+2})))$.  Since $n$ is odd, $n$ must be a factor of $g+1$. Therefore, we could use the relation $2.p_i-2.p_{2g+2}$  to get $\frac{2g+2}{n}.((p_{2g+2}+f(p_{2g+2})+\cdots+f^{n-1}(p_{2g+2})))=\frac{g+1}{n}(2.p_{2g+2}+2.f(p_{2g+2})+\cdots+2.f^{n-1}(p_{2g+2}))=
  \frac{g+1}{n}(2n(p_{2g+2}))=(2g+2).p_{2g+2}$.  Hence  $v_1+\cdots+ v_{\frac{2g+2}{n}-1}+v_{2g+2}=p_1+\cdots+p_{2g+2}-(2g+2).p_{2g+2}$.
   Notice that $v_{2g+2}=0$ by definition. The second relation in the  presentation of $J_2$ implies that $ v_1+\cdots+ v_{\frac{2g+2}{n}-1}=0$, hence we have only $\frac{2g+2}{n}-2$ independent elements of the form  $v_i=p_i+f(p_i)+\cdots+f^{n-1}(p_i)-(p_{2g+2}+f(p_{2g+2})+\cdots+f^{n-1}(p_{2g+2}))$.
  Therefore, the  group of invariant elements of  $J_2$ under $f$, denoted by $J_2^f$, has the following presentation $J_2^f=<v_i, i=1, \cdots, \frac{2g+2}{n}-2|2v_i=0>\ \cong \mathbb{Z}_2^{\frac{2g+2}{n}-2}$.

\end{proof}

\begin{corollary}
   Let $f$ be an automorphism of a hyperelliptic Riemann surface of genus $g$.  Assume $f$ has an odd order $n$.  Assume $f$ fixes no branch points.
  Then the  group of invariant elements of  $J^\ast_m$ under $f$, denoted by $J_m^{\ast\ f}$, is isomorphic to $\mathbb{Z}_2^{\frac{2g+2}{n}-2}$.  In other words $f$ fixes $2^{\frac{2g+2}{n}-2}$\ $m$-spin  structures supported on the branch set.
\end{corollary}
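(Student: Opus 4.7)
The plan is to deduce this corollary from the preceding Theorem~\ref{none-fixed} via the identification supplied by Theorem~\ref{J*-hyp}, supplemented by an averaging argument for existence. Since $m$ is even with $m\mid 2g-2$ (the relevant case for branch-supported $m$-spin divisors on a hyperelliptic surface), Theorem~\ref{J*-hyp} identifies $J_m^\ast$ with $J_2$ as one and the same subgroup of $\Jac(X)$. The automorphism $f$ permutes the branch points and thus preserves this subgroup, so its induced actions on $J_m^\ast$ and on $J_2$ coincide. Consequently $J_m^{\ast\ f} = J_2^f$, and Theorem~\ref{none-fixed} then gives $J_m^{\ast\ f} \cong \mathbb{Z}_2^{(2g+2)/n - 2}$, establishing the first assertion.

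For the counting interpretation, I would invoke the correspondence from Theorem~\ref{spin-Jm}. Fixing a branch-supported $m$-spin divisor $D_0$ as a base point, the map $D \mapsto D - D_0$ restricts to a bijection between the $m$-spin structures supported on the branch set and $J_m^\ast$. If moreover $D_0$ is itself $f$-invariant, then this restricted bijection is $f$-equivariant, so $f$-fixed branch-supported $m$-spin structures correspond bijectively to $J_m^{\ast\ f}$, yielding the count $2^{(2g+2)/n - 2}$.

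The main obstacle is producing such an $f$-invariant branch-supported $m$-spin divisor. Starting from any $D_1 \in \Spin_m(X)$ supported on the branch set (for instance $D_1 = \tfrac{2g-2}{m}\, a_1$), set $a := f(D_1) - D_1$. Since $f$ preserves $K_X$ and permutes branch points, $a$ lies in $J_m^\ast = J_2$. Moreover, $f^n = \mathrm{id}$ forces
\[ (1 + f + \cdots + f^{n-1})(a) = f^n(D_1) - D_1 = 0 \quad \text{in } \Jac(X). \]
Because $n$ is odd and $|J_2| = 2^{2g}$ are coprime, the Tate cohomology of the cyclic group $\langle f \rangle$ acting on $J_2$ vanishes; in particular $\ker(1 + f + \cdots + f^{n-1}) = (1 - f)J_2$. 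Hence there exists $c \in J_2$ with $a = c - f(c)$, and $D_0 := D_1 + c$ is a branch-supported $m$-spin divisor fixed by $f$, closing the argument.
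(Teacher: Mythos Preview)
Your argument is correct and matches the paper's one-line proof, which simply cites Theorem~\ref{none-fixed} together with Theorem~\ref{J_m} (equivalently, your direct appeal to Theorem~\ref{J*-hyp} to identify $J_m^\ast=J_2$ and hence $J_m^{\ast\,f}=J_2^f$). Your additional cohomological step---producing an $f$-invariant branch-supported base point so that the bijection of Theorem~\ref{spin-Jm} becomes $f$-equivariant---is a point the paper leaves implicit here and in the surrounding results; your treatment via the vanishing of $\hat H^{-1}(\langle f\rangle,J_2)$ for $n$ odd is valid and makes the passage from $|J_2^f|$ to the count of $f$-fixed $m$-spin divisors fully rigorous.
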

\begin{proof}
    This follows immediately from the above theorem and from Theorem~\ref{J_m}.
\end{proof}
\begin{theorem}\label{one-fixed}
  Let $f$ be an automorphism of a hyperelliptic Riemann surface of genus $g$.  Assume $f$ has an odd order $n$.  Assume $f$ fixes only one branch point.
  Then the  group of invariant elements of  $J_2$ under $f$, denoted by $J_2^f$, is isomorphic to $\mathbb{Z}_2^{\frac{2g+1}{n}-1}$.  In other words $f$ fixes $2^{\frac{2g+2}{n}-1}$\ $2$-spin structures.
\end{theorem}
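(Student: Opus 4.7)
The plan is to adapt the orbit-counting strategy from Theorem~\ref{none-fixed} to the present setting, where exactly one branch point is fixed. First I would write down the presentation of $J_2$ given in Theorems~\ref{J*-hyp} and~\ref{pres}, and without loss of generality label the unique fixed branch point as $p_{2g+2}$. Since $f$ has odd order $n$ and fixes no other branch point, its action on $\{p_1,\ldots,p_{2g+1}\}$ partitions these points into orbits of size exactly $n$; this forces $n\mid 2g+1$ and produces $\frac{2g+1}{n}$ orbits $O_1,\ldots,O_{(2g+1)/n}$.

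Next, using the fact that every $E\in J_2$ has a $\mathbb{Z}_2$-linear expression $E=\sum_{i=1}^{2g+1}c_i(p_i-p_{2g+2})$, I would observe that $f$-invariance is equivalent to the coefficients $c_i$ being constant on each orbit $O_j$. Thus $J_2^f$ is generated by the orbit sums
$$v_j=\sum_{p\in O_j}(p-p_{2g+2}),\qquad j=1,\ldots,\tfrac{2g+1}{n}.$$
Note that the fixed point $p_{2g+2}$ contributes only the trivial generator $p_{2g+2}-p_{2g+2}=0$, so the fixed branch point plays no independent role in spanning $J_2^f$ — it is, however, crucial as the chosen basepoint in the presentation.

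The final step is to identify the relations among the $v_j$. The second defining relation of $J_2$, namely $p_1+\cdots+p_{2g+2}=(2g+2)p_{2g+2}$, rewrites as $\sum_{i=1}^{2g+1}(p_i-p_{2g+2})=0$, which is precisely $\sum_j v_j=0$. I would then argue that this is the \emph{only} relation among the $v_j$ by a rank comparison: any further relation among these $f$-invariant elements would descend to an extra relation in the full presentation of $J_2$, contradicting $J_2\cong\mathbb{Z}_2^{2g}$. Hence the presentation
$$J_2^f=\langle v_1,\ldots,v_{(2g+1)/n}\mid 2v_j=0,\ v_1+\cdots+v_{(2g+1)/n}=0\rangle$$
exhibits $\frac{2g+1}{n}-1$ independent generators of order $2$, yielding $J_2^f\cong\mathbb{Z}_2^{(2g+1)/n-1}$.

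The main obstacle I anticipate is the bookkeeping around the fixed branch point: unlike in Theorem~\ref{none-fixed}, where the basepoint $p_{2g+2}$ sits inside a nontrivial orbit and one must subtract its orbit sum, here $p_{2g+2}$ is itself the fixed point, and one has to be careful that no spurious relation (beyond $\sum v_j=0$) arises from the interplay between the first relation $2p_i=2p_{2g+2}$ and the $f$-action. This is resolved by noting that the first relation is already used in passing from $J_2$'s free $\mathbb{Z}$-model to its $\mathbb{Z}_2$-linear form, so all relations among the $v_j$ in $J_2^f$ come from $\mathbb{Z}_2$-linear relations among the $p_i-p_{2g+2}$, of which only $\sum_{i=1}^{2g+1}(p_i-p_{2g+2})=0$ survives on orbit sums.
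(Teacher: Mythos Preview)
Your proposal is correct and follows essentially the same route as the paper: label the unique fixed branch point $p_{2g+2}$, take orbit sums $v_j=\sum_{p\in O_j}(p-p_{2g+2})$ as generators of $J_2^f$, and use the relation $p_1+\cdots+p_{2g+2}=(2g+2)p_{2g+2}$ to obtain $\sum_j v_j=0$, leaving $\frac{2g+1}{n}-1$ independent order-$2$ generators. The only difference is cosmetic---the paper writes $v_i=p_i+f(p_i)+\cdots+f^{n-1}(p_i)-n\cdot p_{2g+2}$ and does not spell out the rank-comparison justification that $\sum_j v_j=0$ is the sole relation, which you do.
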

This theorem was also proved using different methods in \cite{invariant}.  Yet, We prove it here using our methods as in the proof of Theorem~\ref{none-fixed}.
\begin{proof}
Recall that  the group $J_2$ has the following presentation
$J_2=\langle p_i-p_{2g+2},  i=1,\ldots, 2g+2| 2.p_i-2.p_{2g+2}=0, p_1+\cdots+p_{2g+2}=(2g+2).p_{2g+2}, \forall i=1,\ldots, 2g+2\rangle.$
Let $p_{2g+2}$ denote the fixed point of $f$.
  Elements of $J_2$ that are fixed by $f$ are of the form  $p_i+f(p_i)+\cdots+f^{n-1}(p_i)-(p_{j}+f(p_{j})+\cdots+f^{n-1}(p_{j}))$ or $p_i+f(p_i)+\cdots+f^{n-1}(p_i)-n.p_{2g+2}$. Elements of the first type can be written in terms of the second type so it is enough to consider elements of the second type.  We have $\frac{2g+1}{n}+1$ orbits.  Without loss of generality, we may assume that the branch points are ordered so that $p_1,\ldots, p_{\frac{2g+1}{n}},  p_{2g+2}$ are representatives of the $\frac{2g+1}{n}+1$ orbits plus the fixed point having an orbit on its own.  Let $v_i=p_i+f(p_i)+\cdots+f^{n-1}(p_i)-(p_{2g+2}+f(p_{2g+2})+\cdots+f^{n-1}(p_{2g+2})), i=1,\ldots, \frac{2g+1}{n}$ and $v_{2g+2}=p_{2g+2}-p_{2g+2}=0$.  Then $ v_1+\cdots+ v_{\frac{2g+1}{n}}+v_{2g+2}=p_1+\cdots+p_{2g+2}-(\frac{2g+1}{n}n.p_{2g+2}-p_{2g+2})=p_1+\cdots+p_{2g+2}-(2g+2).p_{2g+2}$.  Using the second relation in the presentation of $J_2$, we have   $v_1+\cdots+ v_{\frac{2g+1}{n}}+v_{2g+2}=0$.
   Notice that $v_{2g+2}=0$ by definition. The second relation in the  presentation of $J_2$ implies that $ v_1+\cdots+ v_{\frac{2g+1}{n}}=0$, hence we have only $\frac{2g+1}{n}-1$ independent elements of the form $v_i=p_i+f(p_i)+\cdots+f^{n-1}(p_i)-(p_{2g+2}+f(p_{2g+2})+\cdots+f^{n-1}(p_{2g+2}))$.
  Therefore, the  group of invariant elements of  $J_2$ under $f$, denoted by $J_2^f$, has the following presentation $J_2^f=<v_i, i=1, \cdots, \frac{2g+1}{n}-1|2v_i=0>\ \cong \mathbb{Z}_2^{\frac{2g+1}{n}-1}$.

\end{proof}
\begin{corollary}
 Let $f$ be an automorphism of a hyperelliptic Riemann surface of genus $g$.  Assume $f$ has an odd order $n$.  Assume $f$ fixes only one branch point.  Then the  group of invariant elements of  $J^\ast_m$ under $f$, denoted by $J_m^{\ast\ f}$, is isomorphic to $\mathbb{Z}_2^{\frac{2g+1}{n}-1}$.  In other words $f$ fixes $2^{\frac{2g+1}{n}-1}$\ $m$-spin  structures supported on the branch set.
\end{corollary}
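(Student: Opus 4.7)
The plan is to deduce this corollary directly from Theorem~\ref{one-fixed} together with the structural result Theorem~\ref{J*-hyp}. For any even $m$ dividing $2g-2$, Theorem~\ref{J*-hyp} asserts the equality of subgroups $J_m^\ast = J_2$ inside $\Jac(X)$. This equality is as subgroups of the Jacobian, and the action of $f$ on $\Jac(X)$ is the one induced from its linear action on divisors, so it preserves this identification. Hence the invariant subgroups satisfy $J_m^{\ast\, f} = J_2^f$, and Theorem~\ref{one-fixed} immediately gives $J_m^{\ast\, f} \cong \mathbb{Z}_2^{\frac{2g+1}{n}-1}$, which is the first assertion.

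For the count of $f$-invariant $m$-spin structures supported on the branch set, I would appeal to the bijection of Theorem~\ref{spin-Jm}. The essential observation is that since $f$ fixes a branch point — call it $p_{2g+2}$ — the divisor $\theta_0 := \frac{2g-2}{m}\, p_{2g+2}$ is an $m$-spin divisor by the earlier existence result, and it is manifestly $f$-invariant because $f(p_{2g+2}) = p_{2g+2}$. The translation map $\mu_{\theta_0}(D) = D - \theta_0$ from Theorem~\ref{spin-Jm} is therefore $f$-equivariant, and it restricts to a bijection between the set of $f$-invariant $m$-spin divisors supported on the branch set and the invariant subgroup $J_m^{\ast\, f}$. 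This shows the number of such spin structures equals $|J_m^{\ast\, f}| = 2^{\frac{2g+1}{n}-1}$.

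The argument is essentially a bookkeeping translation between three results already established in the paper, so no new computation on the branch set is required. The only point to verify carefully is that choosing the base $m$-spin divisor $\theta_0$ to be concentrated at the unique $f$-fixed branch point makes $\mu_{\theta_0}$ equivariant; without this choice one would need to separately argue that the coset structure is preserved. This step is also precisely where the hypothesis that $f$ fixes at least one branch point enters in an essential way, which explains why the statement parallels Theorem~\ref{one-fixed} rather than Theorem~\ref{none-fixed}.
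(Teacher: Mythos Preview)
Your proof is correct and follows essentially the same route as the paper, whose own proof reads simply ``This follows immediately from the above theorem and from Theorem~\ref{J_m}''; you have merely made explicit the equivariance of the bijection $\mu_{\theta_0}$ that the paper leaves implicit. One small caveat: your closing remark that the fixed-branch-point hypothesis is where the argument essentially hinges slightly overstates matters, since the paper states and proves an identical-style corollary immediately after Theorem~\ref{none-fixed} as well.
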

\begin{proof}
    This follows immediately from the above theorem and from Theorem~\ref{J_m}.
\end{proof}

\begin{theorem}\label{two-fixed}
  Let $f$ be an automorphism of a hyperelliptic Riemann surface of genus $g$.  Assume $f$ has an odd order $n$.  Assume $f$ fixes two branch points.
  Then the  group of invariant elements of  $J_2$ under $f$, denoted by $J_2^f$, is isomorphic to $\mathbb{Z}_2^{\frac{2g}{n}}$.  In other words $f$ fixes $2^{\frac{2g}{n}}$\ $2$-spin structures.
\end{theorem}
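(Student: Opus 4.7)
The plan is to parallel the proofs of Theorems \ref{none-fixed} and \ref{one-fixed}, adapting them to an odd-order automorphism $f$ of order $n$ that fixes exactly two branch points. By Theorems \ref{J*-hyp} and \ref{pres}, the group $J_2$ has the presentation
\begin{equation*}
J_2=\langle p_i-p_{2g+2},\ i=1,\ldots,2g+2\mid 2(p_i-p_{2g+2})=0,\ p_1+\cdots+p_{2g+2}=(2g+2)p_{2g+2}\rangle.
\end{equation*}
Let $p_{2g+1}$ and $p_{2g+2}$ denote the two fixed branch points. Since $n$ is odd, $f$ acts freely on the remaining $2g$ branch points, so $n\mid 2g$ and these points form $\frac{2g}{n}$ orbits of size $n$. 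I would reorder the branch points so that $p_1,\ldots,p_{\frac{2g}{n}}$ are orbit representatives.

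Next I would identify a generating set for $J_2^f$. An $f$-invariant element of $J_2$ must have coefficients constant along $f$-orbits, so modulo the relations of $J_2$ it is a $\mathbb{Z}_2$-combination of the orbit sums
\begin{equation*}
v_i=p_i+f(p_i)+\cdots+f^{n-1}(p_i)-n\cdot p_{2g+2},\qquad i=1,\ldots,\tfrac{2g}{n},
\end{equation*}
together with the extra generator $u=p_{2g+1}-p_{2g+2}$ coming from the second fixed point. Each generator has order two by the first relation of $J_2$. Computing coefficients,
\begin{equation*}
v_1+\cdots+v_{\frac{2g}{n}}+u=p_1+\cdots+p_{2g+1}-(2g+1)p_{2g+2},
\end{equation*}
because the orbits of $p_1,\ldots,p_{\frac{2g}{n}}$ sweep out all $2g$ non-fixed branch points exactly once and $u$ contributes the remaining fixed point $p_{2g+1}$. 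By the second relation of $J_2$, this sum vanishes.

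Hence $J_2^f$ is generated by $\frac{2g}{n}+1$ order-two elements subject to the single nontrivial relation $v_1+\cdots+v_{\frac{2g}{n}}+u=0$; independence of the remaining $\frac{2g}{n}$ generators follows by the same coefficient-comparison argument used in the proofs of Theorems \ref{none-fixed} and \ref{one-fixed}, since distinct $f$-orbits contribute linearly independent coefficient vectors in $J_2\cong\mathbb{Z}_2^{2g}$. Consequently $J_2^f\cong\mathbb{Z}_2^{\frac{2g}{n}}$, giving $2^{\frac{2g}{n}}$ invariant $2$-spin structures. The main thing to check carefully is the coefficient bookkeeping in the sole relation: the orbits of non-fixed points supply $2g$ of the coefficients while $u$ supplies the last, after which the identity $p_1+\cdots+p_{2g+2}=(2g+2)p_{2g+2}$ closes the argument; no genuinely new difficulty arises beyond the previous two theorems.
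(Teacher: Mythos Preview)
Your proof is correct and follows essentially the same approach as the paper: the paper also labels the fixed points $p_{2g+1},p_{2g+2}$, takes as generators the orbit sums $v_i=p_i+f(p_i)+\cdots+f^{n-1}(p_i)-n\cdot p_{2g+2}$ together with $v_{2g+1}=p_{2g+1}-p_{2g+2}$ (your $u$), and shows their sum vanishes by the relation $p_1+\cdots+p_{2g+2}=(2g+2)p_{2g+2}$, leaving $\frac{2g}{n}$ independent order-two generators. Your write-up is in fact a bit cleaner in explaining why $n\mid 2g$ and why invariant elements are precisely the $\mathbb{Z}_2$-combinations of orbit sums.
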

This theorem was also proved using different methods in \cite{invariant}.   We prove it here using our methods as in the proof of Theorems~\ref{none-fixed} and~\ref{one-fixed}.
\begin{proof}
Recall that  the group $J_2$ has the  presentation
$J_2=\langle p_i-p_{2g+2},  i=1,\ldots, 2g+2| 2.p_i-2.p_{2g+2}=0, p_1+\cdots+p_{2g+2}=(2g+2).p_{2g+2}, \forall i=1,\ldots, 2g+2\rangle.$
Let $p_{2g+1},p_{2g+2}$ denote the fixed point of $f$.
  Non-trivial Elements of $J_2$ that are fixed by $f$ are those of the forms
  \begin{enumerate}
    \item $p_i+f(p_i)+\cdots+f^{n-1}(p_i)-(p_{j}+f(p_{j})+\cdots+f^{n-1}(p_{j})), i\neq 2g+1, 2g+2$,
    \item $p_i+f(p_i)+\cdots+f^{n-1}(p_i)-n.p_{2g+1}, i\neq 2g+1, 2g+2$,
    \item $p_i+f(p_i)+\cdots+f^{n-1}(p_i)-n.p_{2g+2}, i\neq 2g+1, 2g+2$,
    \item $p_{i}-p_j, \{i,j\}=\{2g+1, 2g+2\}$.
  \end{enumerate}
 We observe that elements of the first two types can be written in terms of these of the third and fourth types so it is enough to consider elements of the last two types.  We have $\frac{2g}{n}$ orbits of the non-fixed points.  Without loss of generality, we may assume that the branch points are ordered so that $p_1,\ldots, p_{\frac{2g}{n}}$ are representatives of the $\frac{2g}{n}$ orbits.  Let $v_i=p_i+f(p_i)+\cdots+f^{n-1}(p_i)-n.p_{2g+2}, i=1,\ldots, \frac{2g}{n}$ and $v_{2g+1}=p_{2g+1}-p_{2g+2}, v_{2g+2}=p_{2g+2}-p_{2g+2}$.  Notice that $v_{2g+2}=0$.  Furthermore, $ v_1+\cdots+ v_{\frac{2g}{n}}+v_{2g+1}+v_{2g+2}=p_1+\cdots+p_{2g+2}-\frac{2g}{n}n.p_{2g+2}-2p_{2g+2}=p_1+\cdots+p_{2g+2}-(2g+2).p_{2g+2}$.  Using the second relation in the presentation of $J_2$, we have   $v_1+\cdots+ v_{\frac{2g}{n}}+v_{2g+1}+v_{2g+2}=0$.
   Notice that $v_{2g+2}=0$ by definition. The second relation in the  presentation of $J_2$ implies that $ v_1+\cdots+ v_{\frac{2g}{n}}+v_{2g+1}=0$, hence we have only $\frac{2g}{n}$ independent elements of the form $v_i=p_i+f(p_i)+\cdots+f^{n-1}(p_i)-n.p_{2g+2}$.
  Therefore, the  group of invariant elements of  $J_2$ under $f$, denoted by $J_2^f$, has the following presentation $J_2^f=<v_i, i=1, \cdots, \frac{2g}{n}|2v_i=0>\ \cong \mathbb{Z}_2^{\frac{2g}{n}}$.

\end{proof}

\begin{theorem}
  Let $I_p$ be the $p$-involution on a $p$-gonal surface $X$.  Then $I_p$ fixes every spin structure supported on the branch set.
\end{theorem}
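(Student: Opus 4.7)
The plan is to observe that this statement is almost immediate from the explicit description of the $p$-involution given earlier in Section~\ref{hyp-p}. Recall that on a $p$-gonal surface defined by $y^p=(x-e_1)\cdots(x-e_r)$, the $p$-involution is defined by
\[
I_p:X\longrightarrow X, \qquad I_p(x,y)=(x,e^{2\pi i/p}y).
\]
By construction $I_p$ fixes precisely the ramification points $a_i=(e_i,0)$, since these are exactly the points with $y=0$ and hence invariant under multiplication of $y$ by a $p$th root of unity.

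First I would recall from the discussion preceding the statement that, given any symmetry $\sigma$ of $X$, the action on divisors is $\sigma(\sum c_i x_i)=\sum c_i\sigma(x_i)$. Then if $D=\sum_{i}c_i\,a_i$ is any divisor supported on the branch set $B(X)$, applying $I_p$ termwise gives
\[
I_p(D)=\sum_i c_i\,I_p(a_i)=\sum_i c_i\,a_i=D,
\]
because each $a_i$ lies in the fixed point set of $I_p$. Thus $I_p$ acts as the identity on the subgroup of $\Div(X)$ generated by branch points.

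Finally, since $m$-spin divisors are considered up to linear equivalence (via the correspondence of Theorem~\ref{spin-Jm}), and we have the stronger statement $I_p(D)=D$ on the nose, it follows a fortiori that $I_p(D)\sim D$. Hence every $m$-spin structure supported on the branch set is fixed by $I_p$. There is no genuine obstacle here; the content of the statement is simply that $I_p$ fixes each branch point pointwise, and the ``fixing every spin structure'' conclusion is a formal consequence of the $\Z$-linearity of the action on divisors.
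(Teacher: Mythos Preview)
Your proof is correct and follows exactly the same approach as the paper's own proof, which simply states that the result ``follows immediately from the fact that the $p$-involution $I_p$ fixes branch points.'' You have merely spelled out the details of this one-line observation more explicitly.
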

\begin{proof}
  This follows immediately from the fact that  the $p$-involution $I_p$ fixes branch points.
\end{proof}
 One may ask if $I_p$ is the only automorphism with such property.  We will investigate invariant spin structure on $p$-gonal surfaces under automorphisms in a future work.
\section{Mumford's Formula for Spin Structures on Hyperelliptic Surfaces}\label{mumf}
In this section, we review a formula obtained by Mumford in \cite{mumford}.  This formula gives a unique representation of $2$-spin divisors (theta characteristics) on hyperelliptic surfaces.   Moreover, we derive a similar presentation for $m$-spin divisors, when $m$ is even.

\subsection*{Observations}
Let $X$ be a hyperelliptic surface.  We have seen that the canonical divisor is given by  $K_X \sim \pi^*(K_\infty)+R_\pi\sim -2D+p_1+\cdots+ p_{2g+2}\sim -2D+(g+1)D=(g-1)D$, where $D=2.p_i, p_i\in B(X)$.  Recall we have the relations $2p_i=2p_j, \forall p_i, p_j \in B(X)$ and $p_{1} +\cdots+p_{2g+2} = (2g+2).p_j$.

Following \cite{dolg,invariant}, we make the following observations. For a subset $T$ of the branch set, $T\subset B(X)$, define the divisor  $\alpha(T)=(\sum_{p_i\in T}p_i)-\mathrm{card}(T)$.  It follows immediately that $2\alpha_T = 2(\sum_{p_i\in T}p_i)-2\mathrm{card}(T)=2\mathrm{card}(T)-2\mathrm{card}(T)=0$.  Hence,  we have  $\alpha(T)\in J_2$.  Furthermore,  as we pointed out in Section~\ref{hyp-p}, the divisor of the meromorphic function  $h(x,y)=y/(x-e_j)^{g+1}$ is $\divv (h)=p_{1} +\cdots+p_{2g+2} - (2g+2).p_j=0$. Therefore,  $\alpha(B(X))=(\sum_{p_i\in B(X)}p_i)-\mathrm{card}(B(X))=p_{1} +\cdots+p_{2g+2} - (2g+2).p_j=\divv(h)=0$.  Hence, we can assume $\mathrm{card}(T)$ is even, for any subset $T\subset B(X)$, because we can always add the zero divisor $p_j-p_j$ to $\alpha(T)$.   We  also notice that the relations $2p_i=2p_j$ and $p_{1} +\cdots+p_{2g+2} = (2g+2).p_j$ imply that $p_{i_1} +\cdots+p_{i_{g+1}} = p_{j_1} +\cdots+p_{j_{g+1}}$, therefore $\alpha(T)=\alpha(T^c)$, where $T^c$ is the complement of $T$ in $B(X)$.

Let $E_g=\{T|T\subset B(X), \mathrm{card}(T)\text{\ is even}\}/\approx$, where $T_1\approx T_2$ if $T_1=T_2$ or $T_1=T_2^c$.  This space can be given an additive group structure by defining the sum of two elements $T_i, T_j$ to be $T_i+T_j=(T_i\cup T_j)\setminus (T_i\cap T_j)$.  By the above observations, we see that the map $E_g\longrightarrow J_2,  T\longmapsto \alpha(T)$ is an isomorphism.

\subsection*{Mumford Formula for $2$-Spin Divisors}
  \begin{theorem}[Mumford's Formula]\label{mumf-2}
   Every $2$-spin divisor (theta characteristic) is of the form
$\theta = kD + p_{i_1} + \cdots + p_{i_{g-1-2k}}$
for $-1 \le k \le g-1$.  Moreover this representation
is unique if $k \ge 0$ and subject to a single relation when $k = -1$,  $-D+p_{i_1} +\cdots+p_{i_{g+1}} = -D+p_{j_1} +\cdots+p_{j_{g+1}}$.
\end{theorem}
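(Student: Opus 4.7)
The plan is to exploit the correspondence $\mathrm{Spin}_2(X) \leftrightarrow J_2$ from Theorem~\ref{spin-Jm} together with the isomorphism $E_g \cong J_2$, $T \mapsto \alpha(T)$, established in the observations preceding the theorem. Fix a branch point $p_0 \in B(X)$ and set $\theta_0 := (g-1)\,p_0$; since $2\theta_0 = (g-1)(2p_0) = (g-1)\,D \sim K_X$ by the canonical divisor computation of Section~\ref{hyp-p}, $\theta_0$ is itself a $2$-spin divisor and will serve as the base point.

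By the correspondence, every $2$-spin divisor $\theta$ is linearly equivalent to $\theta_0 + \alpha(T)$ for a unique class $[T] \in E_g$, that is, a subset $T \subseteq B(X)$ of even cardinality, determined up to the involution $T \leftrightarrow T^c$. Substituting $\alpha(T) = \sum_{p \in T} p - |T|\,p_0$ gives
\[
\theta \;\sim\; \sum_{p \in T} p + (g-1-|T|)\, p_0.
\]
Writing $g - 1 - |T| = 2k + r$ with $r \in \{0, 1\}$ and applying the relation $2p_0 = D$ converts the $p_0$-contribution into $kD + r\,p_0$; absorbing the residual $r\,p_0$ into the branch-point sum (and, if $p_0$ already appears there, invoking $2p_0 = D$ once more to bundle the resulting $2p_0$ into the $D$-term) yields the desired shape $\theta \sim kD + \sum_{p \in S} p$ for some $S \subseteq B(X)$. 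The degree identity $\deg\theta = g-1$ then forces $|S| = g - 1 - 2k$.

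For uniqueness when $k \ge 0$: here $|S| \le g-1$, so $|S^c| \ge g+3 > |S|$, and hence $S$ is the unique representative of its $E_g$-class of small cardinality; the bijection $E_g \cong J_2$ then pins $S$ down from $\theta$, so any other representation with $k' \ge 0$ must have $S' = S$ and $k' = k$. For $k = -1$: the cardinalities $|S| = g+1 = |S^c|$ now coincide, so the two representatives $T$ and $T^c$ of a single $E_g$-class produce the two equivalent expressions $-D + \sum_{p \in T} p$ and $-D + \sum_{p \in T^c} p$, which is precisely the single relation stated in the theorem. The principal obstacle will be the parity bookkeeping when splitting $(g-1-|T|)p_0 = kD + r p_0$ and routing the residual $r p_0$ through the branch-point sum, especially in the subcase $p_0 \in T$; a useful sanity check is that summing $\binom{2g+2}{g-1-2k}$ over admissible $k \ge 0$ with $\tfrac{1}{2}\binom{2g+2}{g+1}$ for $k = -1$ should recover $|J_2| = 2^{2g}$, simultaneously confirming exhaustion and the single $k=-1$ relation.
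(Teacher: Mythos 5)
Your proposal is correct, but it reaches the conclusion by a different route than the paper. The paper's proof runs in the opposite direction: it first checks that every expression $kD+p_{i_1}+\cdots+p_{i_{g-1-2k}}$ squares to $(g-1)D=K_X$, then argues distinctness from the cardinality of the point sets, and finally establishes \emph{exhaustion} by the count $\tfrac{1}{2}\binom{2g+2}{g+1}+\sum_{k\ge 0}\binom{2g+2}{g-1-2k}=2^{2g}$, matching the total number of theta characteristics. You instead fix the base theta characteristic $\theta_0=(g-1)p_0$ and push every $2$-spin class through the bijections $\mathrm{Spin}_2(X)\leftrightarrow J_2\leftrightarrow E_g$, so existence of the representation is \emph{derived} rather than inferred from a count, and the binomial identity becomes a mere sanity check. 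Both arguments ultimately lean on the same input, namely the isomorphism $E_g\cong J_2$ from the observations (the paper needs its injectivity for the distinctness step; you need its surjectivity for the existence step), so neither is more self-contained; what your route buys is that uniqueness for $k\ge 0$ and the single relation at $k=-1$ fall out transparently as the fibers of $T\mapsto [T]$ under complementation, rather than being asserted from "fewer than $g+1$ points." Two small repairs you should make when writing it up: you must choose the representative of $[T]$ with $\mathrm{card}(T)\le g+1$, since the large representative would drive $k$ below $-1$; and the parity bookkeeping you flag is genuinely needed but routine, because $\mathrm{card}(T)$ even forces $g-1-\mathrm{card}(T)\equiv g-1 \pmod 2$, so the residual $r\,p_0$ and the subcase $p_0\in T$ only ever shift $k$ by one while keeping the point set a set of distinct branch points.
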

\begin{proof}
  In this proof we follow \cite{invariant}. First, we notice that the relations $2p_i=2p_j$ and $p_{1} +\cdots+p_{2g+2} = (2g+2).p_j$ imply that $p_{i_1} +\cdots+p_{i_{g+1}} = p_{j_1} +\cdots+p_{j_{g+1}}$.  Moreover, any divisor of the above form is a $2$-spin divisor because $2\theta = 2kD +2 p_{i_1} + \cdots + 2p_{i_{g-1-2k}}=(2k+g-1+2k)D=(g-1)D=K$. Notice that $g-1-2k=g+1$ when $k=-1$. In that case, the representation is not unique because $p_{i_1} +\cdots+p_{i_{g+1}} = p_{j_1} +\cdots+p_{j_{g+1}}$  for distinct  $i$'s and $j$'s  as pointed out in the observations above. Therefore, when $k=-1$, we have $\frac{1}{2}\binom{2g+2}{g+1}$ different $2$-spin divisors.

  Moreover, the representation is unique when $0\le k \le \frac{g-1}{2}$ because the presentation will contain strictly less than $g+1$ points.  In this case, we have $\sum_{k=0}^{\frac{g-1}{2}}\binom{2g+2}{g-1-2k}$.  Therefore,  Mumford's formula  gives $\frac{1}{2}\binom{2g+2}{g+1}+\sum_{k=0}^{\frac{g-1}{2}}\binom{2g+2}{g-1-2k}=2^{2g}$ different $2$-spin divisors.
\end{proof}
\subsection*{ A Formula for   $m$-Spin Divisors Supported on the Branch set}

We generalize Mumford's formula to the case of $m$-spin divisors supported on the branch set.   Let $\theta=kD+p_{i_{1}}+\cdots + p_{i_{\ell}}$. We will find  values of $\ell$  for which $\theta$ is an $m$-spin structure.  Multiplying by $m$, we get $m\theta=2n\theta=2nkD+n(2.p_{i_{1}}+\cdots +2. p_{i_{\ell}})=(2nk+\ell n)D$.  For $\theta$ to be an $m$-spin structure, we need $(2nk+\ell n)D =(g-1)D$. Solving for $\ell$, we get $\ell=\frac{g-1-2nk}{n}$.
Hence,
  $  m\theta=2n\theta=2nkD+(g-1-2nk)D=(g-1)D=K_X  $ and therefore $\theta$ is an $m$-spin divisor.
\begin{theorem}[General Formula for $m$-Spin Divisors Supported on the Branch Set]\label{mumf-m}
Assume $m=2n$ and $n\Big| g-1$.  Every $m$-spin divisor supported on the branch set is of the form $\displaystyle\theta= kD+p_{i_{1}}+\cdots + p_{i_{\frac{g-1-2nk}{n}}}$, where $\frac{g(n-1)+n+1}{-2n}\le k \le \frac{g-1}{2n}$.  This representation is unique when $\frac{g(n-1)+n+1}{-2n}< k \le \frac{g-1}{2n}$ and subject to the relation $\frac{g(n-1)+n+1}{-2n}D+p_{i_1} +\cdots+p_{i_{g+1}} = \frac{g(n-1)+n+1}{-2n}D+p_{j_1} +\cdots+p_{j_{g+1}}$ when $k=\frac{g(n-1)+n+1}{-2n}$.
\end{theorem}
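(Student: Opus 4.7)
The plan is to mirror the proof of Mumford's formula (Theorem~\ref{mumf-2}), replacing the coefficient $2$ by $m=2n$ throughout. The fact that every $\theta$ of the stated form is indeed an $m$-spin divisor is precisely the computation already performed in the paragraph preceding the theorem: $m\theta = n(\ell + 2k)D = (g-1)D = K_X$ as soon as $\ell = (g-1-2nk)/n$.

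To pin down the range of admissible $k$, non-negativity of $\ell$ forces $k \le (g-1)/(2n)$. For the lower bound, whenever $\ell > g+1$ I would apply the relation $p_1 + \cdots + p_{2g+2} \sim (g+1)D$ to substitute $p_{i_1} + \cdots + p_{i_\ell} \sim (g+1)D - \sum_{p \in B(X) \setminus \{p_{i_1}, \ldots, p_{i_\ell}\}} p$, absorbing the $(g+1)D$ term into the $kD$ coefficient. This reduces $\ell$ to $2g+2-\ell$, so after finitely many such flips we may assume $\ell \le g+1$, equivalently $k \ge -(g(n-1)+n+1)/(2n)$. Direct substitution shows that $\ell = g+1$ occurs exactly at this lower boundary.

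For the existence of such a representation for every $m$-spin divisor supported on $B(X)$, I would combine Theorem~\ref{J*-hyp} (identifying $J_m^\ast$ with $J_2$ for even $m$) with the isomorphism $E_g \cong J_2$, $T \mapsto \alpha(T)$, established in the observations preceding Theorem~\ref{mumf-2}. Since the $m$-spin divisors supported on $B(X)$ form a torsor over $J_m^\ast = J_2$, fixing one base divisor $\theta_0$ of the claimed form (for instance $\theta_0 = p_{i_1} + \cdots + p_{i_{(g-1)/n}}$, corresponding to $k=0$) represents every such $\theta$ as $\theta_0 + \alpha(T)$ for some $T \in E_g$. Expanding this sum and repeatedly replacing any $2p_i$ by $D$ normalizes each branch point coefficient into $\{0,1\}$; each such replacement shifts $(k, \ell) \mapsto (k+1, \ell-2)$ and therefore preserves the spin relation $n\ell + 2nk = g-1$, so the result is automatically of the form displayed in the theorem, possibly after a final complementation step as in the previous paragraph.

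For uniqueness, suppose $kD + \sum_{p \in T} p \sim k'D + \sum_{p \in T'} p$ with $|T|, |T'| \le g+1$. Subtraction yields $(k - k')D \sim \alpha(T \triangle T')$; comparing degrees ($2(k-k')$ on the left, $0$ on the right) forces $k = k'$ and hence $\alpha(T \triangle T') = 0$. Under the isomorphism $E_g \cong J_2$ this holds iff $T \triangle T' \in \{\emptyset, B(X)\}$, giving either $T = T'$ or $T = B(X) \setminus T'$; the latter requires $|T| = |T'| = g+1$, corresponding precisely to the boundary $k = -(g(n-1)+n+1)/(2n)$ and yielding the single relation displayed in the theorem. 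The main subtlety I anticipate is the careful bookkeeping in Step~3, in particular verifying that the normalization procedure preserves $n\ell + 2nk = g-1$; this relies on the elementary but crucial observation that each application of $2p_i \sim D$ changes $(k, \ell)$ by $(+1, -2)$, leaving $n\ell + 2nk$ fixed.
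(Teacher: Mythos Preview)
Your overall strategy is sound and in fact more explicit than the paper's. The paper proves the theorem by a \emph{counting argument}: it checks that each displayed $\theta$ is an $m$-spin divisor, asserts (appealing to the observations before Theorem~\ref{mumf-2}) that representations with fewer than $g+1$ points are mutually inequivalent, and then simply adds up
\[
\frac{1}{2}\binom{2g+2}{g+1}+\sum_{k>\frac{g(n-1)+n+1}{-2n}}^{\frac{g-1}{2n}}\binom{2g+2}{\tfrac{g-1-2nk}{n}}=2^{2g},
\]
matching Theorem~\ref{J_m}; existence (``every $m$-spin divisor on $B(X)$ arises this way'') then drops out of the count for free. Your torsor-plus-normalization argument for existence is a genuine alternative and gives an explicit algorithm for producing the normal form.

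However, your uniqueness step contains a real error. From $kD+\sum_{p\in T}p\sim k'D+\sum_{p\in T'}p$ subtraction gives
\[
(k-k')D\;\sim\;\sum_{p\in T'}p-\sum_{p\in T}p\;=\;\sum_{p\in T'\setminus T}p-\sum_{p\in T\setminus T'}p,
\]
which is \emph{not} $\alpha(T\triangle T')$, and whose degree is $|T'|-|T|$. Since both representations already satisfy $2k+\ell=(g-1)/n$, this degree equals $2(k-k')$ automatically, so the degree comparison is vacuous and cannot force $k=k'$. The correct route is to show (using $2p_i\sim D$ term by term, exactly the computation you do in your existence step) that the full difference satisfies $\theta_T-\theta_{T'}\sim\alpha(T\triangle T')$ in $J_2$; then the isomorphism $E_g\cong J_2$ yields $T\triangle T'\in\{\varnothing,B(X)\}$, i.e.\ $T=T'$ or $T=(T')^c$. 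Under the constraint $|T|,|T'|\le g+1$ the complementary case forces $|T|=|T'|=g+1$, and $k=k'$ then follows from $k=\tfrac{1}{2}\bigl((g-1)/n-|T|\bigr)$. Alternatively, you can bypass this entirely by adopting the paper's counting shortcut.
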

\begin{proof}
 It follows immediately that $\theta$ is an $m$-spin divisor because $  m\theta=2n\theta=2nkD+(g-1-2nk)D=(g-1)D=K_X$.   Furthermore, the representation contains $g+1$ points  when $k=\frac{g(n-1)+n+1}{-2n}$.  In this case, the representation is not unique because $p_{i_1} +\cdots+p_{i_{g+1}} = p_{j_1} +\cdots+p_{j_{g+1}}$ for distinct  $i$'s and $j$'s  as pointed out in the observations above.  Therefore, when $k=\frac{g(n-1)+n+1}{-2n}$, we have $\frac{1}{2}\binom{2g+2}{g+1}$ different $m$-spin divisors supported on the branch set.

 Moreover, the representation is unique when $\frac{g(n-1)+n+1}{-2n}< k \le \frac{g-1}{2n}$ because the presentation will contain strictly less than $g+1$ points.  In this case, we have  $$\displaystyle \sum_{k>\frac{g(n-1)+n+1}{-2n}}^{\frac{g-1}{2n}}\binom{2g+2}{\frac{g-1-2nk}{n}}$$ different $m$-spin divisors supported on the branch set.  Therefore, the representation in the theorem determines $$\frac{1}{2}\binom{2g+2}{g+1}+\sum_{k>\frac{g(n-1)+n+1}{-2n}}^{\frac{g-1}{2n}}\binom{2g+2}{\frac{g-1-2nk}{n}}=2^{2g}$$ different $m$-spin divisors supported on the branch set.

 \end{proof}
 \section{Future Work}
 When viewing $2$-spin structures as divisors on compact Riemann surfaces, there is a nice relation between spin structures and certain classes of meromorphic differentials.  In particular, the authors of \cite{spinrep} have proved the following theorem.
 \begin{theorem}
   Let $X$ be a compact Riemann surface.  There is a natural bijection between $2$-spin structures on $X$ and classes of non-zero meromorphic differentials with even zeros and poles where $\omega_1\sim \omega_2$ when $\omega_1/\omega_2=f^2$, where $f$ is a meromorphic function on $X$.
 \end{theorem}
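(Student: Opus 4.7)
\bigskip

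\noindent\textbf{Proof plan.} The plan is to construct the bijection directly by sending a differential $\omega$ (with all zero/pole orders even) to the divisor class $\tfrac{1}{2}(\omega)$, and then to check the two sides match under their respective equivalence relations. Throughout, a $2$-spin structure will be treated as a linear equivalence class $[D]$ with $2D\sim K$, in the spirit of the divisor-theoretic reformulation used earlier in the paper.

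First I would define the map $\Phi:\{\omega\text{ meromorphic differential with even zeros and poles}\}/\!\sim\; \longrightarrow \mathrm{Spin}_2(X)$ by $\Phi(\omega)=[\tfrac{1}{2}(\omega)]$. Since $(\omega)$ has only even coefficients, $\tfrac{1}{2}(\omega)$ is a genuine integer divisor $D_\omega$; and because any two meromorphic differentials differ by a meromorphic function, $(\omega)\sim K$, hence $2D_\omega\sim K$, so $D_\omega$ represents a $2$-spin structure. Well-definedness on equivalence classes is immediate: if $\omega_1=f^2\omega_2$ then $(\omega_1)-(\omega_2)=2(f)$, whence $D_{\omega_1}-D_{\omega_2}=(f)$ and $[D_{\omega_1}]=[D_{\omega_2}]$.

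Next I would verify surjectivity. Given any $2$-spin class $[D]$, fix once and for all a meromorphic differential $\omega_0$ with $(\omega_0)=K$. Since $2D\sim K$, there is a meromorphic function $g$ with $(g)=2D-K$, and the differential $\omega=g\,\omega_0$ has divisor exactly $2D$, which is even. Therefore $\Phi([\omega])=[D]$.

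For injectivity, suppose $\Phi([\omega_1])=\Phi([\omega_2])$, so that $D_{\omega_1}\sim D_{\omega_2}$ via some meromorphic $h$ with $(h)=D_{\omega_1}-D_{\omega_2}$. Then $\omega_1/\omega_2$ is a meromorphic function on $X$ with divisor $2(h)=(h^2)$, so $\omega_1/\omega_2$ and $h^2$ differ by a nonzero constant $c\in\C^{\times}$. Choosing any square root $\alpha$ of $c$ and setting $f=\alpha h$ yields $\omega_1=f^2\omega_2$, i.e. $\omega_1\sim\omega_2$. The main subtlety — and the only spot where anything beyond bookkeeping is used — is this last step: passing from "two meromorphic functions have the same divisor" to "they differ by a constant" (a standard consequence of the fact that a holomorphic function on a compact Riemann surface is constant), combined with the algebraic closedness of $\C$ to extract the square root of that constant. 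Everything else is routine verification that $\Phi$ respects the two equivalence relations.
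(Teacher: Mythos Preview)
The paper does not actually contain a proof of this theorem: it is quoted in the ``Future Work'' section as a result due to the authors of \cite{spinrep}, and the present paper only announces a generalization to $m$-spin structures to appear elsewhere. So there is no in-paper proof to compare your proposal against.

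That said, your argument is correct and is the standard one. The map $\omega\mapsto \tfrac{1}{2}(\omega)$ is exactly the natural bijection intended, your well-definedness and surjectivity checks are clean, and your handling of injectivity --- reducing to ``same divisor implies differ by a nonzero constant'' and then extracting a square root in $\C$ --- is the right (and only) nontrivial point. One small cosmetic remark: in the surjectivity step you write ``fix $\omega_0$ with $(\omega_0)=K$''; strictly speaking $K$ is a linear equivalence class, so you mean pick any nonzero meromorphic differential $\omega_0$ and set $K=(\omega_0)$, but this is harmless.
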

We have proved a generalization of the above theorem to the case of $m$-spin structures.  We are working on applications of that to the case of hyperelliptic surfaces. This shall appear in our next paper.
This work is related to
\cite{almalkithesis}.

\bibliographystyle{plain}
 \bibliography{mybib}

\end{document}